\theoremstyle{plain}
\numberwithin{equation}{section}
\newtheorem{thm}{Theorem}[section]
\newtheorem{theorem}[thm]{Theorem}
\newtheorem{corollary}[thm]{Corollary}
\newtheorem{lemma}[thm]{Lemma}
\newtheorem{remark}[thm]{Remark}
\newtheorem{proposition}[thm]{Proposition}
\newtheorem*{theorem*}{Theorem}
\begin{document}

\setcounter{page}{1}

\title[Classification of finite dimensional nilpotent Lie superalgebras by their multiplier]{Classification of finite dimensional nilpotent Lie superalgebras by their multiplier}
\author[Nayak]{Saudamini Nayak}
\address{Institute of Mathematics \& Applications\\
         Andharua, 
          Bhubaneswar-751029\\
                India}
\email{ anumama.nayak07@gmail.com}

\subjclass[2010]{Primary 17B30; Secondary 17B05.}
\keywords{Nilpotent Lie superalgebra; Multiplier; Special Heisenberg Lie superalgebra}

\begin{abstract}\label{abstract}
Let $L$ be a nilpotent Lie superalgebra of dimension $(m\mid n)$  and
$s(L) = \frac{1}{2}[(m + n - 1)(m + n -2)]+ n+ 1 - \dim \mathcal{M}(L)$, 
where $\mathcal{M}(L)$ denotes the Schur multiplier of $L$.  Here $s(L)\geq 0$ and the structure of all non-abelian nilpotent Lie superalgebras with $s(L)=0$ is known((\cite{Nayak2019}). This paper is devoted to obtain all  nilpotent Lie superalgebras when $s(L) \leq 2$. Further, we apply those results to list all non-abelian nilpotent Lie superalgebras $L$ with $ t(L) \leq 4$.   
   
\end{abstract}

\maketitle

\section{Introduction}\label{intro}

For a given group $G$, the notion of the Schur multiplier $\mathcal{M}(G)$ arose from the work of I. Schur on projective representation of groups as the second 
cohomology group with coefficients in $\mathbb{C}^{*}$ \cite{Kar1987}. Let $0\longrightarrow R \longrightarrow F \longrightarrow G \longrightarrow 0$ be a free presentation of the group $G$, 
then it can be shown by Hopf's formula that $\mathcal{M}(G) \cong R\cap [F,F]/[R,F]$. Green in \cite{Green1956} proved that for every finite $p$-groups $G$ of order $p^{n}$, the order of Schur multiplier of $G$ is equal to $p^{\frac{1}{2}n(n-1)-t(G)}$ for some non-negative integer $t(G)$. Classification of $p$-groups in terms of the order of their Schur multipliers have been studied by several authors, for instance the structure of $G$ are obtained when $t(G) \in \{1, 2, 3, 4, 5\}$ in \cite{Berkovich1991, Ellis1999, Gaschutz1967, Nir2009, Zhou1994}.

With a similar motivation, as of Green, Moneyhun in \cite{Moneyhun1994}, showed that for  nilpotent Lie algebras $A$ of dimension $n$, $\dim \mathcal{M}(A) = \frac{1}{2}n(n-1)-t(A)$, where $t(A)$ is some non-negative integer. Further, Batten et. al., in \cite{Batten1993, BS1996, BMS1996}
obtained structure of all such $A$ with $t(L) \leq 2$. In \cite{Hardy1998} and later in \cite{Hardy2005} with a different method to \cite{BMS1996}, the cases $t(A)= 3, 4, 5, 6$ and $t(A)=7, 8$ were considered, respectively. Moreover classification of filiform nilpotent Lie algebras has been done for $t(A) \leq 16$ in \cite{ Bosko2010, GJK1998}.

Niroomand and Russo in \cite{NR2011}, have improved the upper bound of  Moneyhun. Precisely, they showed that for non-abelian nilpotent Lie algebras $A,\,\dim \mathcal{M}(A)\leq \frac{1}{2}(n-1)(n-2)+1-s(A)$ for some $s(A) \geq 0$. Then the structure of  Lie algebras $A$ are obtained when $s(A) \in\{ 0, 1, 2, 3\}$ in  \cite{NR2011,Nir2011, Saeedi2017}.

The theory of Lie superalgebras and Lie supergroups has many applications in various areas of Mathematics and Physics. In 1975, Kac in \cite{KAC1977} offered a comprehensive description of the mathematical theory of Lie superalgebras, and has established the classification of all finite-dimensional simple Lie superalgebras over an algebraically closed field of characteristic zero. However the classification of finite dimensional nilpotent Lie superalgebras is still an open problem as it is the case for Lie algebras. Till today nilpotent Lie superalgebras $L$ where $\dim L \leq 5$ over real and complex fields are known \cite{alv,back, ah, na}. All Lie superalgebras up to dimension four over real field are known since 1978 \cite{back}, and in  this list there are no simple Lie superalgebras. Then nilpotent Lie superalgebras up to dimension five over both real and complx fields are classified by Hegazi \cite{ah}.  Again classification of all five dimensional Lie supealgebras are done over complex field \cite{na}. Recently Isabbel et. al., have given classification of nilpotent Lie superalgebras $L$ with $\dim L \leq 5$ in \cite{alv}, which includes some missing five dimensional superalgebras in \cite{ah, na}. Furthermore classification of some low dimensional filiform Lie superalgebras are done in \cite{gilg2001, gomez2004}.
 
\smallskip

Recently the theory of Schur multiplier is extended to Lie superalgebras \cite{Nayak2018,Nayak2019,NRK2019}. In \cite{Nayak2019} the author found an upper bound for the dimension of Schur multiplier of Lie superalgebras. Precisely, it has been shown  \cite{Nayak2019} that for  non-abelian nilpotent Lie superalgebras $L$ of dimension $(m\mid n)$ and $\dim L'=(r\mid s)$, $\dim \mathcal{M}(L)\leq \frac{1}{2}[(m+n+r+s-2)(n+m-r-s-1)]+n+1$. Now consider $\dim \mathcal{M}(L)=\frac{1}{2}[(m+n+r+s-2)(n+m-r-s-1)]+n+1-s(L).$ 
  But as the bound for the dimension of multiplier of $L$ is decreasing with respect to $r, s$, so we define $s(L)$ as,
  \begin{equation}\label{eq2}
 s(L) = \frac{1}{2}(m+n-2)(m+n-1)+n+1- \dim \mathcal{M}(L),
   \end{equation}
   and $s(L) \geq 0$.
  In this paper, our aim is to obtain all nilpotent Lie superalgebras with $s(L)\leq 2$ and for this we use the classification resuts in \cite{alv}. 
  
\subsection{Main results}

 We need the following notation and terminology to state the main results. Here we denote an abelian Lie superalgebra of dimension $(m \mid n)$ by $A(m\mid n)$, the Heisenberg Lie superalgebra of even center with dimension $(2m+1 \mid n)$ by $H(m, n), \; m+n \geq 1$ and the Heisenberg Lie superalgebra of odd center with dimension $(n \mid n+1)$ by $H_n, \; n\geq1$. 
 
 \begin{theorem}\label{th4.9a}
 Let $L$ be a nilpotent Lie superalgebra with $\dim L =(m \mid n)$. Then $s(L) =0$ if and only if 
\[L \cong  H(1 \mid 0) \oplus A(m-3 \mid n),\; A(2 \mid 0), \; A(0 \mid 2), \; A(1 \mid 1).\]
 \end{theorem}
 
\begin{theorem}\label{th4.9}
Let $L$ be a nilpotent Lie superalgebra with $\dim L =(m \mid n)$. Then $s(L) =1$ if and only if 
\[L \cong  L_{5, 0}^2,\; A(1\mid 0), \; A(0 \mid 1).\]
\end{theorem}

\begin{theorem}\label{th4.10}
Let $L$ be a nilpotent Lie superalgebra with $\dim L =(m \mid n)$. Then $s(L) = 2$ if and only if 
\begin{align*}
L \cong &H(p, q)\oplus A(m-2p-1 \mid n-q), \;( p+q \geq 2), H(0, 1)\oplus A(m-1 \mid n-1),\\
&  H_1\oplus A(m - 1 \mid n-2), L_{4, 0}^1,  L_{5, 0}^2 \oplus A(1\mid 0), L_{5, 0}^2 \oplus A(0\mid 1).
\end{align*}
\end{theorem}

\begin{theorem}\label{th410}
Let $L$ be a non-abelian nilpotent Lie superalgebra, $\dim L=(m \mid n)$. Then
\begin{enumerate}[(i)]
\item $t(L)=1$ if and only if $L \cong H(1, 0)$.
\item $t(L)=2$ if and only if $L \cong H(1, 0)\oplus A(1 \mid 0)$ or $H(0,1)$.
\item $t(L)=3$ if and only if $L \cong H(1, 0)\oplus A(2 \mid 0)$, $H(0,1)\oplus A(1 \mid 0)$, $H(0,2)$, $H_{1}$ or $H(0,1)\oplus A(0 \mid 1)$.
\item $t(L)=4$ if and only if $L \cong H(1, 0)\oplus A(3 \mid 0)$, $L_{5,0}^2$, $L_{4,0}^1$, $H(0,3)$, $H_{1} \oplus A(1 \mid 0)$, $H(0,1)\oplus A(1 \mid 1)$, $H(0,1)\oplus A(0 \mid 2)$, $H(0, 1)\oplus A(2 \mid 0),\, H_{1}\oplus A(0 \mid 1)$  or $H_{1} \oplus A(2 \mid 0)$.
\end{enumerate}
\end{theorem}

\smallskip

The organization of the paper is as follows, after the brief introduction, we provide some useful notation in Section \ref{sec 1}. Section \ref{sec 2} contains some auxiliary results about multiplier of Lie superalgebras. Here, we also explicitly compute multiplier of  $H_{n}, \; n \geq 1$. In Section \ref{sec 3}, we characterize nilpotent Lie superalgebras of dimension less than or equal to five with some specific conditions. In Section \ref{sec 4} we classify all finite dimensional nilpotent Lie superalgebras $L$ with $s(L) \leq 2$ and also with $ t(L) \leq 4$.  
 
\section{Preliminaries}\label{sec 1}

Let $\mathbb{Z}_{2}=\{\bar{0}, \bar{1}\}$ be a field. A $\mathbb{Z}_{2}$-graded vector space $V$ is simply a direct sum of vector spaces $V_{\bar{0}}$ and $V_{\bar{1}}$, i.e., $V = V_{\bar{0}} \oplus V_{\bar{1}}$. It is also referred as a superspace. We consider all vector superspaces and superalgebras are over $\mathbb{C}$. Elements in $V_{\bar{0}}$ (resp. $V_{\bar{1}}$) are called even (resp. odd) elements. Non-zero elements of $V_{\bar{0}} \cup V_{\bar{1}}$ are called homogeneous elements. For a homogeneous element $v \in V_{\sigma}$, with $\sigma \in \mathbb{Z}_{2}$ we set $|v| = \sigma$ is the degree of $v$. If $\dim V_{\bar{0}} = m$ and $\dim V_{\bar{1}} = n$, we say that  dimension of $V$ is $(m\mid n)$. A vector subspace $U$ of $V$ is called $\mathbb{Z}_2$-graded subsuperspace(or subspace) if $U= (V_{\bar{0}} \cap U) \oplus (V_{\bar{1}} \cap U)$. We adopt the convention that whenever the degree function appeared in a formula, the corresponding elements are supposed to be homogeneous.

A {\it Lie superalgebra} (see \cite{KAC1977, Musson2012}) is a superspace $L = L_{\bar{0}} \oplus L_{\bar{1}}$ with a bilinear mapping
$ [., .] : L \times L \rightarrow L$ satisfying the following identities:

\begin{enumerate}
\item $[L_{\alpha}, L_{\beta}] \subset L_{\alpha+\beta}$, for $\alpha, \beta \in \mathbb{Z}_{2}$ ($\mathbb{Z}_{2}$-grading),
\item $[x, y] = -(-1)^{|x||y|} [y, x]$ (graded skew-symmetry),
\item $(-1)^{|x||z|} [x,[y, z]] + (-1)^{ |y| |x|} [y, [z, x]] + (-1)^{|z| |y|} [z,[ x, y]] = 0$ (graded Jacobi identity),
\end{enumerate}
for all $x, y, z \in L$. 

\smallskip
It then follows that $L_{\bar{0}}$ is a Lie algebra and $L_{\bar{1}}$ is a $L_{\bar{0}}$-module. If $L_{\bar{1}} = 0$, then $L$ is  a Lie algebra, though in general  Lie superalgebras are not Lie algebras. A  superspace $L$ with identically vanishing Lie brackets is trivially a Lie superalgebra, called abelian Lie superalgebra. Also Lie superalgebras without even parts($L_{\bar{0}} = 0$), are  abelian, as $[x, y] = 0$ for all $x, y \in L_{\bar{1}}$. A subsuperalgebra (subalgebra) of $L$ is a $\mathbb{Z}_{2}$-graded subspace which is closed under bracket operation. A $\mathbb{Z}_{2}$-graded subspace $I$ is a {\it graded ideal} of $L$ if $[I,L]\subseteq I$. For instance 
\[Z(L) = \{z\in L : [z, x] = 0\;\mbox{for all}\;x\in L\}\] 
is a graded ideal and it is called the {\it center} of $L$. 
Let $Z_0(L) = <0>$, then the $i$th center of $L$ is defined inductively by 
\[\frac{Z_i(L)}{Z_{i-1}(L)} = Z\left(\frac{L}{Z_{i-1}(L)}\right)\]
for all $i\geq 1$. 
Clearly, $Z_1(L) = Z(L)$.  If $I$ and $J$ are graded ideals of $L$, then so is $[I, J]$. 

\smallskip

 By a {\it homomorphism} between superspaces $f: V \rightarrow W $ of degree $|f|\in \mathbb{Z}_{2}$, we mean a linear map satisfying $f(V_{\alpha})\subseteq W_{\alpha+|f|}$ for $\alpha \in \mathbb{Z}_{2}$. In particular, if $|f| = \bar{0}$, then the homomorphism $f$ is called homogeneous linear map of even degree. A Lie superalgebra homomorphism $f: L \rightarrow M$ is a complex homogeneous linear map of even degree such that $f[x,y] = [f(x), f(y)]$ holds for all $x, y \in L$. For an ideal $I$ of $L$, the quotient Lie superalgebra $L/I$ inherits a canonical Lie superalgebra structure such that the natural projection map becomes a homomorphism. 
 
\smallskip
 
 The notion of {\it epimorphisms, isomorphisms} and {\it auotomorphisms} have the obvious meaning. For Lie superalgebra $L$, if $\dim L_{\bar{0}} = m$ and $\dim L_{\bar{1}} = n$, we say that  dimension of $L$ is $(m\mid n)$. But simply writing $\dim L= t$,  we mean, the basis of  $L$ has t homogeneous elements.  
 
\smallskip

For Lie superalgebra $L = L_{\bar{0}} \oplus L_{\bar{1}}$ the descending central sequence is defined by $L^{0} = L, L^{k+1} = [L^{k}, L]$ for all $k\geq 0$. If $L^{k+1} = \{0\}$ for some $k$, the Lie superalgebra is called nilpotent and $k$ is called the nilpotency class if $L^{k} \neq 0$. The subalgebra $L^{1}=[L,L]=:L'$ is called derived subalgebra of $L$, and $L$ is abelian if and only if $L'=0$. Abelian Lie superalgebras are nilpotent. Heisenberg Lie superalgebras are example of non-abelian nilpotent Lie sueralgebras.

\smallskip

Heisenberg Lie superalgebras $L$ are  nilpotent Lie superalgebra with $1$-dimensional homogeneous center such that  $L' \subseteq Z(L)$  \cite{RSS2011}. If $L'=Z(L)$ and $\dim L'=1$ then $L$ is called special Heisenberg.  All finite dimensional special Heisenberg Lie superalgebras are of two types namely, $H(m, n)$ and $H_n$, $0 \leq m \in \mathbb{N}$ and $1 \leq  n \in \mathbb{N}$, where,

$H(m, n) = H_{\bar{0}}\oplus H_{\bar{1}}$
\begin{align*}
H_{\bar{0}}&=\{ x_{1},\ldots,x_{m},x_{m+1},\ldots,x_{2m},\, z\mid [x_{i},x_{m+i}]=z, i=1,\ldots,m\}\\
H_{\bar{1}}&=\{y_{1},\ldots,y_{n}\mid [y_{j},y_{j}]=z, j=1,\ldots,n\},
\end{align*}
and
  \begin{equation*}
H_{n} =<x_{1},\ldots,x_{n}>\oplus< y_{1}, \ldots, y_{n}, z>\quad \mbox{with}\quad [x_{i},y_{i}]=z, i=1,\ldots,n. 
\end{equation*}
Here $H(m, n)$ is called special Heisenberg Lie superalgebra with one dimensional even center and $H_{n}$ is called with one dimensional odd center.

An extension of a Lie superalgebra $L$ is a short exact sequence 
\begin{equation}\label{eq1'}
\xymatrix{
0 \ar[r] & M\ar[r]^{e} & K\ar[r]^{f} & L \ar[r] & 0}. 
\end{equation}

If $L$ is a Lie superalgebra generated by a $\mathbb{Z}_{2}$-graded set $X = X_{\bar{0}} \cup X_{\bar{1}}$  and $\phi : X \rightarrow L$ is an even degree map,
then there exists a free Lie superalgebra $F$ and  $\psi: F \rightarrow L$ extending $\phi$. Let $R = \mbox{Ker} (\psi)$. The extension 
\begin{equation}\label{eq0}
0 \longrightarrow R \longrightarrow F \longrightarrow L \longrightarrow 0
\end{equation} 
is called a {\it free presentation} of $L$ and is denoted by $(F, \psi)$ \cite{Musson2012}. With this free presentation of $L$,  the {\it Schur  multiplier} of $L$ denoted by  $\mathcal{M}(L)$, and is defined as
$$ 
\mathcal{M}(L) = \frac{[F,F]\cap R}{[F, R]}.
$$
A {\it central extension} of $L$ is an extension \eqref{eq1'} such that $M \subseteq Z(K)$. The central extension is said to be a {\it stem extension} of $L$ if $M \subseteq Z(K)\cap K' $. Finally, we call the stem extension a {\it stem cover} if $M \cong \mathcal{M}(L)$ and in this case $K$ is said to be a cover of Lie superalgebra $L$. 

\smallskip

Moreover the stem extension \ref {eq1'} is {\it maximal} if every epimorphism of any other stem extension of $L$ onto $0 \longrightarrow M \longrightarrow  K \longrightarrow   L \longrightarrow 0$ is necessarily an isomorphism.
The author in \cite{Nayak2018} has proved that stem covers do exist for each Lie superalgebras and that the maximal stem extensions of finite dimensional Lie superalgebras $L$ (or, equivalently dimension of $K$ is maximal) are precisely stem covers of $L$. Hence the stratergy that we use to find dimension of multiplier of Lie superalgebras is to find maximal stem extensions of those Lie superalgebras.

\section{Auxiliary results}\label{sec 2}

In this section, at first we list some known results which we use throughout.

\begin{theorem}[See \cite{Nayak2019}]\label{th3}
Let $L=L_{\bar{0}} \oplus L_{\bar{1}}$ be a Lie superalgebra with 
$\dim \left( L/Z(L)\right) = (m \mid n)$. Then 
$$\dim L'\leq \frac{1}{2}\left[(m+n)^2 + (n-m)\right].$$
\end{theorem}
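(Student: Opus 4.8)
The plan is to reduce the bracket to the quotient $\bar L := L/Z(L)$ and then count, component by component in the $\mathbb{Z}_2$-grading, how many linearly independent bracket values can occur. First I would observe that since $[Z(L),L]=0$, the bracket map $[\,\cdot\,,\cdot\,]\colon L\times L\to L'$ descends to a well-defined bilinear map $\bar L\times\bar L\to L'$, and that $L'$ is spanned by the images of pairs of basis elements of $\bar L$. Writing $\dim\bar L=(m\mid n)$, I would fix a homogeneous basis $\{e_1,\dots,e_m\}$ of $\bar L_{\bar 0}$ and $\{f_1,\dots,f_n\}$ of $\bar L_{\bar 1}$, so that $L'$ is spanned by the brackets $[e_i,e_j]$, $[e_i,f_j]$ and $[f_i,f_j]$.

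The key step is to exploit graded skew-symmetry to trim this spanning set. For even--even brackets, $|e_i||e_j|=\bar 0$ gives $[e_j,e_i]=-[e_i,e_j]$ and in particular $[e_i,e_i]=0$, so only the $\binom{m}{2}$ brackets with $i<j$ survive. For even--odd brackets there is no symmetry identification among the distinct pairs $(e_i,f_j)$, giving at most $mn$ values. For odd--odd brackets, $|f_i||f_j|=\bar 1$ yields $[f_j,f_i]=+[f_i,f_j]$, i.e.\ the bracket is \emph{symmetric} on $\bar L_{\bar 1}$; crucially $[f_i,f_i]$ need not vanish, so the independent values are indexed by $i\le j$, giving $\binom{n+1}{2}=\tfrac{n(n+1)}{2}$ of them.

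Summing the three bounds, $L'$ is spanned by at most
$$\binom{m}{2}+mn+\binom{n+1}{2}=\frac{m(m-1)}{2}+mn+\frac{n(n+1)}{2}$$
elements, and a direct simplification rewrites the right-hand side as $\tfrac{1}{2}\left[(m+n)^2+(n-m)\right]$, which is exactly the asserted bound.

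I expect the only real subtlety to be bookkeeping rather than a genuine obstacle: one must be careful that the odd--odd component is symmetric (not skew) and therefore contributes the diagonal terms $[f_i,f_i]$, since it is precisely these terms that account for the $+n$ asymmetry between the even and odd parts in the final formula. Getting this sign right is what distinguishes the super case from the classical Lie-algebra bound $\tfrac12 d(d-1)$.
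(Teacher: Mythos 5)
Your argument is correct: the bracket descends to $L/Z(L)$ because $[Z(L),L]=0$, and the component-by-component count $\binom{m}{2}+mn+\binom{n+1}{2}=\tfrac{1}{2}\left[(m+n)^2+(n-m)\right]$, with the odd--odd part contributing symmetric (not skew) pairs including the diagonal, is exactly right. The paper states this theorem without proof (citing \cite{Nayak2018}), and your counting argument is the standard one underlying that reference, so there is nothing further to compare.
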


\begin{theorem}[See \cite{Nayak2019}]\label{th3.1}
Let $L$ be a Lie superalgebra with 
$\dim L = (m \mid n)$. Then 
\[\dim \mathcal{M}(L) \leq \frac{1}{2}\left[(m+n)^2 + (n-m)\right].\]
 Further, equality holds if and only if $L$ is abelian.
\end{theorem}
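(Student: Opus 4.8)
The plan is to obtain the inequality from Theorem \ref{th3} by passing to a stem cover of $L$, and then to extract the equality case directly from the resulting chain of estimates. For brevity write $b(m,n)=\frac{1}{2}\bigl[(m+n)^2+(n-m)\bigr]$ for the asserted upper bound, and note first that $b$ is non-decreasing in each argument separately: a direct check gives $b(m+1,n)-b(m,n)=m+n\ge 0$ and $b(m,n+1)-b(m,n)=m+n+1\ge 0$, so $b(m',n')\le b(m,n)$ whenever $m'\le m$ and $n'\le n$.

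Next I would fix a stem cover
\[
0 \longrightarrow M \longrightarrow K \stackrel{\pi}{\longrightarrow} L \longrightarrow 0,
\]
which exists by \cite{Nayak2018a}; here $M$ is a graded subspace with $M\cong\mathcal{M}(L)$ and $M\subseteq Z(K)\cap K'$. From the exact sequence $\dim K=\dim L+\dim M$ holds in each graded degree, and since $M\subseteq Z(K)$ forces $\dim Z(K)\ge \dim M$, I obtain $\dim\bigl(K/Z(K)\bigr)\le\dim L=(m\mid n)$ componentwise. Applying Theorem \ref{th3} to $K$ and then the monotonicity of $b$ yields the chain
\[
\dim\mathcal{M}(L)=\dim M\le\dim K'\le b\bigl(\dim K/Z(K)\bigr)\le b(m,n),
\]
where the first inequality uses $M\subseteq K'$. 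This proves the bound.

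For the equality statement, suppose $\dim\mathcal{M}(L)=b(m,n)$. Then every inequality in the displayed chain is an equality; in particular $\dim M=\dim K'$, and since $M\subseteq K'$ with both spaces finite dimensional this forces $M=K'$. Consequently $L\cong K/M=K/K'$ is abelian. The converse requires showing that an abelian Lie superalgebra $A$ of dimension $(m\mid n)$ attains the bound, i.e. $\dim\mathcal{M}(A)=b(m,n)$; I would verify this by computing the multiplier of $A$ directly from a free presentation and identifying it with the super (graded) exterior square $\Lambda_s^2 A$, whose even part $\Lambda^2 A_{\bar 0}\oplus S^2 A_{\bar 1}$ and odd part $A_{\bar 0}\otimes A_{\bar 1}$ have total dimension $\binom{m}{2}+\binom{n+1}{2}+mn=b(m,n)$.

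The routine points are the monotonicity of $b$ and the graded dimension bookkeeping for $K/Z(K)$. The main obstacle is the converse direction of the equality case, namely pinning down $\mathcal{M}(A)\cong\Lambda_s^2 A$ for the abelian superalgebra, which is where the sign conventions of the super bracket genuinely enter: the odd generators contribute a symmetric square $S^2 A_{\bar 1}$ rather than an exterior one. A clean way to make this rigorous is to exhibit the free $2$-step nilpotent Lie superalgebra on $(m\mid n)$ generators as an explicit stem cover of $A$, whose derived subalgebra is precisely $\Lambda_s^2 A$ and which therefore realizes the maximum.
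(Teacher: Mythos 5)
Your argument is correct. Note that the paper itself states this theorem without proof, citing \cite{Nayak2018}, so there is no in-paper proof to compare against; but your route --- pass to a stem cover $0\to M\to K\to L\to 0$, use $M\subseteq Z(K)\cap K'$ to get $\dim(K/Z(K))\le(m\mid n)$ componentwise, apply Theorem \ref{th3} to $K$, and invoke monotonicity of the bound --- is exactly the standard Moneyhun-style argument that underlies Corollary \ref{cor3.3} as well, and every step checks out (including that finiteness of $\dim K'$ is obtained before it is used, so there is no circularity). The equality case is also handled correctly: equality forces $M=K'$, whence $L\cong K/K'$ is abelian, and for the converse your count $\binom{m}{2}+\binom{n+1}{2}+mn=\frac{1}{2}\left[(m+n)^2+(n-m)\right]$ for the graded exterior square (symmetric on the odd part, as the super skew-symmetry dictates) is the right computation; realizing it via the free $2$-step nilpotent Lie superalgebra on $(m\mid n)$ generators as a stem cover of the abelian algebra is a clean way to close that last gap.
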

As a consequence 
\begin{equation}\label{eq1}
\dim \mathcal{M}(L)= \frac{1}{2}[(m+n)^{2}+(n-m)]-t(L),
\end{equation} for some non-negative integer $t(L)$ and $t(L)=0$ if and only if $L$ is abelian.

\begin{theorem}[See \cite{Nayak2019}]\label{th3.2}
Let $L$ be a finite dimensional Lie superalgebra with $\dim L=(m \mid n)$, $K \subseteq Z(L)$ be an graded ideal and $H = L/K$. Then
\begin{enumerate}
\item $\dim \mathcal{M}(L) + \dim (L'\cap K) \leq \dim \mathcal{M}(H) + \dim \mathcal{M}(K) + \dim (H/H'\otimes K/K');$
\item $ \dim \mathcal{M}(L) + \dim (L'\cap K)  \leq \frac{1}{2}[(m+n)^2 +(n-m)].$
\end{enumerate}
\end{theorem}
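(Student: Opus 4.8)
The plan is to deduce both inequalities from a single four-term exact sequence tying together the multipliers of $L$, $H=L/K$ and $K$, built from one free presentation. First I would fix a free presentation $0\to R\to F\to L\to 0$ and let $S$ be the graded ideal of $F$ containing $R$ with $S/R\cong K$; then $F/S\cong H$, and the hypothesis $K\subseteq Z(L)$ gives the key relation $[F,S]\subseteq R$. Writing $\mathcal{M}(L)=(R\cap F')/[F,R]$ and $\mathcal{M}(H)=(S\cap F')/[F,S]$, the inclusion $R\subseteq S$ induces a natural map $\alpha\colon\mathcal{M}(L)\to\mathcal{M}(H)$.

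Next I would pin down the two ends of $\alpha$. Since $[F,S]\subseteq R\cap F'$, the kernel of $\alpha$ is exactly $[F,S]/[F,R]$; and a short diagram chase using the identity $S\cap(F'+R)=(S\cap F')+R$ identifies the cokernel with $L'\cap K$. This yields the exact sequence
\[
0\longrightarrow \frac{[F,S]}{[F,R]}\longrightarrow \mathcal{M}(L)\longrightarrow \mathcal{M}(H)\longrightarrow L'\cap K\longrightarrow 0,
\]
in which the middle map is $\alpha$. The alternating dimension count then gives the clean equality $\dim\mathcal{M}(L)+\dim(L'\cap K)=\dim\mathcal{M}(H)+\dim\bigl([F,S]/[F,R]\bigr)$.

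The technical heart is bounding $\dim\bigl([F,S]/[F,R]\bigr)$. Writing $S=R+T$ with $T$ a graded lift of $K$, the rule $\bar{x}\otimes\bar{s}\mapsto[x,s]+[F,R]$ defines a surjection $F/F'\otimes S/R\twoheadrightarrow[F,S]/[F,R]$, the graded Jacobi identity together with $[F,S]\subseteq R$ showing that $F'$ may be factored out. Splitting $F/F'$ as $F/(S+F')\oplus(S+F')/F'$, the contribution of $(S+F')/F'\otimes S/R$ reduces to the image of $[T,T]/[F,R]$, which—because $[T,T]\subseteq R$ and $K$ is abelian—is a homomorphic image of $\mathcal{M}(K)$, while the complementary part is governed by $F/(S+F')\otimes S/R=H/H'\otimes K/K'$. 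Hence $\dim\bigl([F,S]/[F,R]\bigr)\leq\dim(H/H'\otimes K/K')+\dim\mathcal{M}(K)$, and combined with the displayed equality this gives part (1). The main obstacle I anticipate lies precisely here: tracking the super-signs in the Jacobi computation that guarantees the factorization through $H/H'$, and correctly identifying the $[T,T]$-contribution with the super-exterior square $\mathcal{M}(K)$ of the abelian superalgebra $K$.

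Finally, I would derive part (2) from part (1). Setting $\dim L=(m\mid n)$ and $\dim K=(p\mid q)$, so that $\dim H=(m-p\mid n-q)$, Theorem~\ref{th3.1} bounds $\dim\mathcal{M}(H)$ and $\dim\mathcal{M}(K)$ by $\tfrac12[(m-p+n-q)^2+(n-q-m+p)]$ and $\tfrac12[(p+q)^2+(q-p)]$ respectively, while the super tensor product satisfies $\dim(H/H'\otimes K/K')\leq(m+n-p-q)(p+q)$. Writing $f(a,b)=\tfrac12[(a+b)^2+(b-a)]$, a direct computation gives the identity $f(m,n)=f(m-p,n-q)+f(p,q)+(p+q)(m+n-p-q)$, so these three bounds sum to exactly $\tfrac12[(m+n)^2+(n-m)]$, which is part (2).
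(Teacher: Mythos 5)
The paper does not prove Theorem~\ref{th3.2} itself; it is quoted from \cite{Nayak2018}, so there is no in-paper proof to compare against. Your argument --- the four-term exact sequence $0\to [F,S]/[F,R]\to\mathcal{M}(L)\to\mathcal{M}(H)\to L'\cap K\to 0$ obtained from a free presentation, the surjection $F/F'\otimes S/R\twoheadrightarrow [F,S]/[F,R]$ split along $F=\bigl(S+F'\bigr)+(\text{complement})$, and the numerical identity for part (2) --- is the standard Batten--Moneyhun--Stitzinger scheme adapted to the graded setting, and every step checks out (in particular the modular-law identification of the cokernel with $L'\cap K$, the use of $K\subseteq Z(L)$ to get $[F,S]\subseteq R$ and $[T,T]\subseteq R$, and the dimension identity $f(m,n)=f(m-p,n-q)+f(p,q)+(p+q)(m+n-p-q)$ with $f(a,b)=\tfrac12[(a+b)^2+(b-a)]$, which matches the abelian bound of Theorem~\ref{th3.1}). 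I see no gap; this is almost certainly the proof given in the cited source.
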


\begin{theorem}[See \cite{Nayak2019}]\label{th3.4}
Let $A$ and $B$ be two finite dimensional Lie superalgebras. Then 
\begin{equation*}
\dim \mathcal{M}(A\oplus B) = \dim \mathcal{M}(A) + \dim \mathcal{M}(B) + \dim (A/A'\otimes B/B'). 
\end{equation*}
\end{theorem}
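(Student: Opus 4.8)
The plan is to reduce the computation of $\mathcal{M}(A\oplus B)$ to an explicit free presentation built out of free presentations of the two factors. Choose free presentations $0\to R\to F\xrightarrow{\psi} A\to 0$ and $0\to S\to E\xrightarrow{\varphi} B\to 0$, and let $P=F\ast E$ be the coproduct (free product) of $F$ and $E$ in the category of Lie superalgebras. Since a coproduct of free Lie superalgebras is again free, the induced even epimorphism $\theta:P\to A\oplus B$, restricting to $\psi$ on $F$ and to $\varphi$ on $E$, is a free presentation of $A\oplus B$ with kernel $T=\ker\theta$. The first step is to identify $T$: because $\theta$ sends $[F,E]$ into $[A,B]=0$ and kills $R$ and $S$, the kernel $T$ is exactly the graded ideal of $P$ generated by $R\cup S\cup[F,E]$. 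By definition $\mathcal{M}(A\oplus B)=\bigl([P,P]\cap T\bigr)/[P,T]$, so everything comes down to analysing this quotient.

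Next I would exhibit the three expected summands inside $[P,P]\cap T$. Note that $[F,F]\cap R$ and $[E,E]\cap S$ both lie in $[P,P]\cap T$, and $[F,E]\subseteq[P,P]\cap T$ as well. Working modulo $[P,T]$ and using the graded Jacobi identity, one checks that the bilinear pairing $F\times E\to([F,E]+[P,T])/[P,T]$, $(f,e)\mapsto[f,e]+[P,T]$, annihilates $[F,F]$ in the first slot (because $[[F,F],E]\subseteq[F,[F,E]]\subseteq[P,T]$), annihilates $R$ in the first slot (because $[R,E]\subseteq[T,P]=[P,T]$), and symmetrically annihilates $[E,E]$ and $S$ in the second slot. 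Hence the pairing factors through $A/A'\otimes B/B'$ and yields a surjection $A/A'\otimes B/B'\twoheadrightarrow([F,E]+[P,T])/[P,T]$. Similarly, the natural maps identify the images of $[F,F]\cap R$ and $[E,E]\cap S$ modulo $[P,T]$ with $\mathcal{M}(A)$ and $\mathcal{M}(B)$ respectively, using that $[F,R]$ and $[E,S]$ are absorbed into $[P,T]$.

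The main obstacle is to upgrade these surjections to an internal direct sum decomposition: I must show that modulo $[P,T]$ the three pieces meet only in $0$, and that the cross term is the full tensor product $A/A'\otimes B/B'$ rather than a proper quotient of it. Both points require genuine control of the free product $P=F\ast E$, for which I would invoke the structural (Lazard elimination-type) description of a free product of free Lie superalgebras, which exhibits $P$ as $F\oplus E\oplus W$ with $W$ a free Lie superalgebra whose lowest-degree component is a copy of $F/F'\otimes E/E'$. This simultaneously separates the $F$-, $E$- and cross-contributions to $[P,P]\cap T$ and, after passing to the quotient by the relations $R,S$, identifies the cross term modulo $[P,T]$ with the full $A/A'\otimes B/B'$. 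Assembling these facts gives $[P,P]\cap T=([F,F]\cap R)\oplus([E,E]\cap S)\oplus[F,E]\pmod{[P,T]}$ and hence the isomorphism $\mathcal{M}(A\oplus B)\cong\mathcal{M}(A)\oplus\mathcal{M}(B)\oplus(A/A'\otimes B/B')$, whose dimensions give the stated formula.

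As an independent check (and a cleaner, though less self-contained, route), one can identify $\mathcal{M}(L)$ with the second Chevalley--Eilenberg homology $H_2(L)$ of a Lie superalgebra with trivial coefficients. The homology complex of $A\oplus B$ is the (super) tensor product of the complexes of $A$ and $B$, so the K\"unneth theorem over a field gives $H_2(A\oplus B)\cong\bigoplus_{i+j=2}H_i(A)\otimes H_j(B)$. Using $H_0(L)=\mathbb{F}$, $H_1(L)=L/L'$ and $H_2(L)=\mathcal{M}(L)$ recovers the formula directly, the only care being the super sign conventions in the K\"unneth isomorphism.
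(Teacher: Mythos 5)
This theorem is not proved in the paper at all: it is imported verbatim from \cite{Nayak2018}, so there is no in-paper argument to compare against. Your free-presentation route is the natural super-analogue of the Batten--Moneyhun--Stitzinger proof for Lie algebras, and the skeleton is sound: the identification of $T=\ker\theta$ as the ideal generated by $R\cup S\cup[F,E]$, and the pairing argument showing that the cross term is a quotient of $A/A'\otimes B/B'$, are both correct. The one place you lean on heavy machinery --- Lazard elimination for $F\ast E$ --- can be replaced by something much cheaper: the retraction $\pi_F\colon P=F\ast E\to F$ (identity on $F$, zero on $E$) carries $T$ into $R$ and hence $[P,T]$ into $[F,R]$, while fixing $[F,F]\cap R$ pointwise; this immediately gives $([F,F]\cap R)\cap[P,T]\subseteq[F,R]$, i.e.\ the injectivity of $\mathcal{M}(A)\to\mathcal{M}(A\oplus B)$, and symmetrically for $B$, and it also separates the cross term from the other two summands. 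The only remaining lower bound --- that the cross term is all of $A/A'\otimes B/B'$ and not a proper quotient --- is obtained by exhibiting the explicit central extension of $A\oplus B$ by $A/A'\otimes B/B'$ with bracket $[(a,b),(a',b')]=\bar a\otimes\bar b'-(-1)^{|a'||b|}\bar a'\otimes\bar b$ on a suitable complement, rather than by appealing to the structure of $W$. Your K\"unneth alternative is also valid, but note that it silently assumes the Hopf formula $\mathcal{M}(L)\cong H_2(L;\mathbb{F})$ for Lie superalgebras; that identification is itself a theorem requiring the vanishing of $H_2$ of a free Lie superalgebra, so it is not really a shortcut unless you are willing to cite it.
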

\begin{theorem}[See \cite{Nayak2019}]\label{th3.5}
Let $H(m,n)$ be a special Heisenberg Lie superalgebra with even center of dimension $(2m+1\mid n)$, then 
$$
\dim \mathcal{M}(H(m,n))=\begin{cases}
2m^2-m-1+2mn+n(n+1)/2 & \mbox{if}\ m + n \geq 2 \\
0 & \mbox{if }\ m=0, n=1\\

2  &\mbox{if }\ m=1, n=0.
\end{cases}
$$
\end{theorem}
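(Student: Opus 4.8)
The plan is to compute $\mathcal{M}(H(m,n))$ directly from the definition $\mathcal{M}(L)=([F,F]\cap R)/[F,R]$, using a free presentation adapted to the weight grading of the free Lie superalgebra. First I would let $F$ be the free Lie superalgebra on a homogeneous set $V=V_{\bar{0}}\oplus V_{\bar{1}}$ with $\dim V=(2m\mid n)$, matching the generators $x_1,\dots,x_{2m},y_1,\dots,y_n$. This $F$ carries a weight grading $F=\bigoplus_{k\geq 1}F_k$ with $F_1=V$ and $F_{k+1}=[V,F_k]$, and graded skew-symmetry forces its weight-two part to split as
\[
F_2=\Lambda^2 V_{\bar{0}}\ \oplus\ (V_{\bar{0}}\otimes V_{\bar{1}})\ \oplus\ S^2 V_{\bar{1}},
\]
the even generators contributing an alternating square and the odd ones a symmetric square (since $[y,y]$ need not vanish in the super setting), so $\dim F_2=(2m^2-m)+2mn+\tfrac{n(n+1)}{2}$. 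Because $H(m,n)$ is two-step nilpotent with one-dimensional center $\langle z\rangle$, its defining kernel is $R=W\oplus\bigoplus_{k\geq 3}F_k$, where $W\subset F_2$ is the codimension-one subspace annihilated by the quotient $F_2\to\langle z\rangle$ that identifies all of $[x_i,x_{m+i}]$ and $[y_j,y_j]$ with the common image $z$ and kills every other basis bracket.

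Next I would compute $[F,R]$ by a weight count. Since $F_k=[V,F_{k-1}]$, every component $F_k$ with $k\geq 4$ lies in $[F,R]$, whereas in weight three only $[V,W]$ can occur and there is no weight-two contribution; hence $[F,R]=[V,W]\oplus\bigoplus_{k\geq 4}F_k$. As $R\subseteq[F,F]$, this yields
\[
\mathcal{M}(H(m,n))=R/[F,R]\cong W\ \oplus\ F_3/[V,W],
\]
so $\dim\mathcal{M}(H(m,n))=\dim W+\dim F_3-\dim[V,W]$ with $\dim W=\dim F_2-1=2m^2-m-1+2mn+\tfrac{n(n+1)}{2}$, which is already the asserted value. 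It therefore remains only to determine when the correction $\dim F_3-\dim[V,W]$ is nonzero.

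The crux, and the expected main obstacle, is showing $F_3=[V,W]$ in the generic range $m+n\geq 2$. Writing $F_3=[V,W]+[V,z]$, I would prove $[V,z]\subseteq[V,W]$ by graded-Jacobi reassociation: for $v$ outside the pair realizing $z$, the identity $[v,[a,b]]=[[v,a],b]\pm[a,[v,b]]$ lands in $[V,W]$ because the inner brackets then avoid the center; for the finitely many $v$ lying in that pair, I would exploit a \emph{second} realization of $z$ — some $w\in\{[x_i,x_{m+i}]-z,\ [y_j,y_j]-z\}$ lies in $W$ precisely when $m+n\geq 2$ — to replace $z$ by $z+w$ and reduce to the previous case, while the purely odd diagonal direction $[y_1,[y_1,y_1]]=0$ is handled by graded Jacobi together with $\mathrm{char}\,\mathbb{F}\neq 3$. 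This gives $\dim\mathcal{M}=\dim W$ throughout $m+n\geq 2$. The two remaining boundary points must be checked by hand: for $(m,n)=(1,0)$ no second realization exists, so $F_3/[V,W]=F_3$ contributes its full dimension two (the two basic weight-three commutators on two even generators), yielding $\dim\mathcal{M}=2$; for $(m,n)=(0,1)$ the char $\neq 3$ relation forces $F_3=0$ and $\dim W=0$, yielding $\dim\mathcal{M}=0$. Assembling these three cases reproduces the stated formula.
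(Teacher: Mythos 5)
Your argument is correct, but it follows a genuinely different route from the one the paper relies on. The paper imports Theorem \ref{th3.5} from \cite{Nayak2018} without reproving it, and its proof of the companion result for the odd-center case (Theorem \ref{th3.7}) proceeds by explicitly \emph{constructing} a stem cover: one writes down a candidate central extension $0\to W\to K\to H\to 0$ with as many new central brackets as possible, uses the graded Jacobi identity to kill the redundant ones, and then invokes the fact that a stem extension of maximal dimension is a stem cover, so that $\dim\mathcal{M}=\dim W$. You instead compute $\mathcal{M}(H(m,n))=R/[F,R]$ directly from the weight grading of the free Lie superalgebra, getting $\mathcal{M}\cong W\oplus F_3/[V,W]$ with $W$ the codimension-one kernel in $F_2=\Lambda^2V_{\bar0}\oplus(V_{\bar0}\otimes V_{\bar1})\oplus S^2V_{\bar1}$, and then show $F_3=[V,W]$ for $m+n\geq 2$ via reassociation plus a second realization of the central element; your boundary analyses at $(m,n)=(1,0)$ and $(0,1)$ (including the use of $\mathrm{char}\,\mathbb{F}\neq 3$ to kill $[y,[y,y]]$) are also sound and reproduce the exceptional values $2$ and $0$. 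Your approach buys a self-contained upper \emph{and} lower bound in one computation — the construction method only gives the lower bound until one separately certifies maximality of $K$, a point the paper's Theorem \ref{th3.7} proof passes over rather quickly — while the paper's method has the advantage of exhibiting the cover explicitly, which is what the later structural arguments actually use. The only cosmetic slip is the phrase ``the inner brackets then avoid the center'': what you need, and what your case analysis in fact delivers, is that $[v,x_1]$ and $[v,x_{m+1}]$ lie in $W$, i.e.\ map to $0$ in $H'$, which holds precisely because $v$ is outside the pair realizing $z$.
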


The following result is an immediate consequence of Theorem \ref{th3.1}, Theorem \ref{th3.4} along with Theorem \ref{th3.5}.
\begin{corollary}\label{cor3.6}
Suppose $K$ is an abelian Lie superalgebra of dimension $(m-2p-1\mid n-q)$ and suppose $L:= H(p,q) \oplus K$ is a Lie superalgebra. Then 
$$
\dim \mathcal{M}(L)=\begin{cases}
\frac{1}{2}\left[(m+n)^2-(3m+n)\right] & \mbox{if}\ p+q \geq 2\;\; \mbox{or}\;\;p=0, q=1 \\
 2 + \frac{1}{2}\left[(m+n)^2-(3m+n)\right] & \mbox{if }\ p = 1, q = 0.
\end{cases} 
$$
\end{corollary}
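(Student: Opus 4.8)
The plan is to apply the direct-sum formula of Theorem~\ref{th3.4} to $L = H(p,q) \oplus K$, which reduces the problem to three separate pieces: $\dim \mathcal{M}(H(p,q))$, $\dim \mathcal{M}(K)$, and the tensor term $\dim\bigl(H(p,q)/H(p,q)' \otimes K/K'\bigr)$. Since $K$ is abelian we have $K' = 0$, so $K/K' = K$, and the equality case of Theorem~\ref{th3.1} gives $\dim \mathcal{M}(K) = \frac{1}{2}\bigl[(a+b)^2 + (b-a)\bigr]$, where $(a\mid b) = (m-2p-1\mid n-q)$ is the superdimension of $K$.

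Next I would read off $\dim \mathcal{M}(H(p,q))$ from Theorem~\ref{th3.5}, and this is exactly the step that produces the case distinction in the statement. The generic value $2p^2 - p - 1 + 2pq + q(q+1)/2$ applies when $p+q \geq 2$ or $(p,q) = (0,1)$, whereas the exceptional value $2$ that Theorem~\ref{th3.5} attaches to $H(1,0)$ is what propagates to the extra summand $2$ when $(p,q) = (1,0)$.

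For the tensor factor I would use that the derived subalgebra $H(p,q)'$ coincides with the one-dimensional even center, so $H(p,q)/H(p,q)'$ has superdimension $(2p\mid q)$ and total dimension $2p+q$; since $K/K' = K$ has total dimension $(m-2p-1)+(n-q)$, the tensor term equals $(2p+q)\bigl((m-2p-1)+(n-q)\bigr)$.

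Finally I would add the three quantities and simplify. Writing $S = (m-2p-1)+(n-q) = (m+n-1)-(2p+q)$, the portion $\frac{1}{2}S^2 + (2p+q)S$ factors as $\frac{1}{2}\bigl[(m+n-1)^2 - (2p+q)^2\bigr]$, and when this is combined with $\dim \mathcal{M}(H(p,q))$ together with the $(b-a)$-correction coming from $\dim \mathcal{M}(K)$, all the $p$- and $q$-dependent terms cancel, collapsing the sum to $\frac{1}{2}\bigl[(m+n)^2 - (3m+n)\bigr]$. The computation is entirely mechanical; the only point demanding care is keeping the even/odd sign $(b-a)$ in the Theorem~\ref{th3.1} formula straight and verifying that the quadratic contributions in $p$ and $q$ annihilate, which is precisely what forces the final answer to depend only on the superdimension $(m\mid n)$ of $L$.
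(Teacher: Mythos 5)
Your proposal is correct and follows exactly the route the paper intends: the paper gives no written proof but states the corollary as an immediate consequence of Theorems \ref{th3.1}, \ref{th3.4} and \ref{th3.5}, which is precisely the combination you use (direct-sum formula, abelian equality case for $\mathcal{M}(K)$, and the $H(p,q)$ multiplier with its exceptional value at $(p,q)=(1,0)$ producing the extra summand $2$). The algebraic simplification you outline checks out, including the observation that the $(p,q)=(0,1)$ case falls under the generic formula of Theorem \ref{th3.5}.
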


Now, we explicitly compute the multiplier of special Heisenberg Lie superalgebra with odd center.
\begin{theorem}\label{th3.7}
Let $H_{n}$, $n \geq 1$ be a special Heisenberg Lie superalgebra with odd center and $\dim H_{n}=(n\mid n+1)$, then 
$$
\dim \mathcal{M}(H_{n})=\begin{cases}
2n^{2}-1 & \mbox{if}\ n \geq 2 \\
 2 & \mbox{if }\ n= 1.
\end{cases} 
$$
\end{theorem}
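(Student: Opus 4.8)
The plan is to compute $\mathcal{M}(H_n)$ directly from a free presentation, mirroring the computation of $\dim\mathcal{M}(H(m,n))$ in Theorem \ref{th3.5}. Let $V=V_{\bar 0}\oplus V_{\bar 1}$ be the superspace with even basis $x_1,\dots,x_n$ and odd basis $y_1,\dots,y_n$, let $F$ be the free Lie superalgebra on $V$, graded as $F=\bigoplus_{k\ge 1}F_k$ with $F_k$ the span of degree-$k$ brackets, and let $\psi\colon F\to H_n$ be the presentation sending $x_i,y_j$ to the corresponding generators. Writing $R=\ker\psi$, the multiplier is $\mathcal{M}(H_n)=([F,F]\cap R)/[F,R]$, so the whole computation reduces to understanding $R$ and $[F,R]$ degree by degree.

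Since $H_n$ is two-step nilpotent with $H_n'=\langle z\rangle$ of superdimension $(0\mid 1)$, one gets $R\cap F_1=0$, $F_k\subseteq R$ for all $k\ge 3$, and $R\cap F_2=\ker(F_2\to H_n')$. First I would pin down $\dim F_2$, the dimension of the super-exterior square of $V$: its even part is $[V_{\bar0},V_{\bar0}]\oplus[V_{\bar1},V_{\bar1}]$ of dimension $\binom{n}{2}+\binom{n+1}{2}=n^2$, and its odd part is $[V_{\bar0},V_{\bar1}]$ of dimension $n^2$, so $\dim F_2=2n^2$. As $z$ is odd and $[x_i,y_i]=z$, the map $F_2\to H_n'$ is onto with one-dimensional image, whence $\dim(R\cap F_2)=2n^2-1$. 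Because $[F,R]$ vanishes in degree $2$, this entire space survives into the multiplier.

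Next I would show the higher-degree contributions collapse. For $k\ge 4$ we have $F_{k-1}\subseteq R$ and $F_k=[F_1,F_{k-1}]$, so $[F,R]_k=F_k$ and these degrees contribute nothing. The decisive degree is $3$: here $[F,R]_3=[F_1,R\cap F_2]=[V,\,R\cap F_2]$, so the contribution is $F_3/[V,R\cap F_2]$. Writing $F_3=[V,F_2]$ and splitting $F_2=(R\cap F_2)\oplus\langle[x_1,y_1]\rangle$, everything reduces to deciding whether $[V,[x_1,y_1]]\subseteq[V,R\cap F_2]$. The plan is to expand $[v,[x_1,y_1]]$ by the graded Jacobi identity into brackets of the form $[[v,x_1],y_1]$ and $[x_1,[v,y_1]]$ and to absorb them using the relations $[x_i,x_j],[y_i,y_j],[x_i,y_j]\ (i\ne j)$ and $[x_i,y_i]-[x_1,y_1]$ spanning $R\cap F_2$. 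For $n\ge 2$ the extra generators $x_2,y_2$ make this absorption always possible, e.g. $[x_1,[x_1,y_1]]=[x_1,[x_1,y_1]-[x_2,y_2]]+[x_1,[x_2,y_2]]$, where the second term unfolds via Jacobi into brackets with $[x_1,x_2],[x_1,y_2]\in R\cap F_2$; hence $F_3=[V,R\cap F_2]$ and $\dim\mathcal{M}(H_n)=2n^2-1$.

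The case $n=1$ is where the pattern breaks and is the main obstacle. Here $V=\langle x\rangle\oplus\langle y\rangle$, $F_2=\langle[y,y]\rangle_{\bar0}\oplus\langle[x,y]\rangle_{\bar1}$, and $R\cap F_2=\langle[y,y]\rangle$. The graded Jacobi identity forces $[y,[y,y]]=0$ (using $\mathrm{char}\,\mathbb{F}\neq 3$) and makes the multidegree-$(1,2)$ part one-dimensional with $[x,[y,y]]=2[y,[x,y]]$ (using $\mathrm{char}\,\mathbb{F}\neq 2$), so $\dim F_3=2$ with basis $[x,[x,y]]$ and $[y,[x,y]]$. Since $[V,R\cap F_2]=\langle[x,[y,y]]\rangle=\langle[y,[x,y]]\rangle$, the class of $[x,[x,y]]$ survives, contributing one extra dimension and giving $\dim\mathcal{M}(H_1)=(2\cdot1-1)+1=2$. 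I expect the delicate bookkeeping of these Jacobi reductions—establishing complete absorption for $n\ge2$ while isolating the single surviving generator for $n=1$—to be the crux of the argument, and it is exactly here that the hypothesis $\mathrm{char}\,\mathbb{F}\neq 2,3$ is used.
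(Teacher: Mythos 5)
Your argument is correct, and it reaches the result by a genuinely different route from the paper. The paper does not compute $([F,F]\cap R)/[F,R]$ degree by degree; instead it writes down an explicit central extension $K=C+W$ of $H_n$ with as many new central generators as possible ($w_i$ correcting $[x_i,y_i]=z$, together with $\gamma_{i,j}=[x_i,x_j]$, $\delta_{i,j}=[y_i,y_j]$, $\beta_{i,j}=[x_i,y_j]$ and the degree-three candidates $\eta_i=[x_i,z]$, $\eta_i'=[y_i,z]$), uses the graded Jacobi identity to eliminate $w_1$ and all $\eta_i,\eta_i'$, counts the surviving $2n^2-1$ generators, and then invokes the fact from \cite{Nayak2018a} that a stem extension of maximal dimension is a stem cover, so that $\dim W=\dim\mathcal{M}(H_n)$. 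Your computation is the mirror image of this: your space $R\cap F_2$ of dimension $2n^2-1$ corresponds to the $\gamma,\delta,\beta,w$-generators, and your verification that $F_3=[V,R\cap F_2]$ for $n\ge 2$ is exactly the paper's Jacobi computation killing the $\eta_i,\eta_i'$. What your approach buys is independence from the ``maximal stem extension $=$ stem cover'' result (which the paper invokes rather briskly with the phrase ``as $K$ is of maximal dimension''), and a transparent explanation of why $n=1$ is exceptional: the class of $[x,[x,y]]$ in $F_3/[V,R\cap F_2]$ cannot be absorbed for lack of a second pair $x_2,y_2$, so one dimension migrates from degree two to degree three and the cover of $H_1$ is three-step nilpotent. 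This is arguably cleaner than the paper's $n=1$ construction, where the proposed $W=\langle w,\eta\rangle$ with $[x,y]=z+w$ and $[y,y]=\eta$ is not visibly contained in $K'$, even though the dimension count $\dim\mathcal{M}(H_1)=2$ agrees. The paper's method, in exchange, exhibits the covering superalgebra explicitly, which is of independent use elsewhere in the paper.
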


\begin{proof}
 First suppose that $n>1$ and we want to find a stem cover for the special Heisenberg Lie superalgebra $H_{n}$. Let $W$ be a superspace generated by

$$\{w_{i}, \eta_{i}, \eta_{i}', \gamma_{i,j}, \delta_{i,j}, \beta_{i,j}\}.$$ Consider a superspace $C$ with a basis, 

$$\{x_{1}, x_{2}, \cdots , x_{n}, z, y_{1}, y_{2}, \cdots, y_{n}\}$$
 where $x_i,\, (i =1, \ldots, n)$ are even elements and  $z, y_j,\, (j =1, \ldots, n)$ are odd elements and put $K= C+ W$. We want to define a Lie superbracket in $K$ which turns $K$ into a Lie superalgebra. Define
\begin{align*}
[x_{i}, y_{i}] & = z + w_{i},\;\; 1\leq i \leq n,\\
[x_{i}, z] & = \eta_{i},\;\; 1\leq i \leq n,\\
[y_{i},z] &= \eta'_{i}, \;\; 1\leq i \leq n,\\
[x_{i}, x_{j}] & = \gamma_{i, j},\; \mathrm{for}\; 1 \leq i \leq j \leq n,\\
[y_{i}, y_{j}] & = \delta_{i,j},\; \mathrm{for}\; 1 \leq i \leq j \leq n, \\
[x_{i}, y_{j}] &= \beta_{i, j}, \; \mathrm{for}\; 1 \leq i \neq j \leq n.
\end{align*}
The above multiplications remain unaffected even if we replace $z$ by $\zeta$, put $w_{1} = 0$ and replace $w_{i}$ by $\hat{w}_{i} = w_{i} - w_{1}$ for $i \geq 2$. So $\zeta=[x_{1}, y_{1}] = z + w_{1}$ and
 for $1 < i \leq n$ we have $[x_{i},y_{i}]= z+ \hat{w}_{i}$. Again, for $1 \leq i \leq n$
\begin{align*}
 [x_{i}, \zeta] &= \eta_{i}\\
  [y_{i}, \zeta]  &= \eta'_{i}.
   \end{align*}

Using the graded Jacobi identity we have
\begin{align*}
0 & =J(x_{1}, y_{1}, x_{i})\\
  & =[[x_{1},y_{1}], x_{i}]+[[y_{1}, x_{i}],x_{1}]+[[x_{i}, x_{1}],y_{1}]\\
  & = [\zeta , x_{i}] - [\beta_{i, 1}, x_{1}] - [\gamma_{1, i}, y_{1}]  \\
  & = [\zeta, x_{i}] + 0 + 0 = -\eta_{i}, 
\end{align*}
 for $2 \leq i \leq n$ and by choosing $J(x_{2}, y_{2}, x_{1})=0$ we get $\eta_{1}=0$.
Similarly, we can make $\eta_{i}'$ equal to zero by considering $J(x_{1}, y_{1}, y_{i})=0$ when $1\leq i \leq n$.
Therefore $K$ is a Lie superalgebra and  
\begin{align*}
w_{i} &\quad 1< i\leq n,\\
\gamma_{i, j} & \quad 1\leq i < j \leq n,\\
\delta_{i,j} & \quad 1 \leq i \leq j \leq n, \\
\beta_{i, j} & \quad 1 \leq i \neq j \leq n, 
 \end{align*}
 forms a basis of $W$. Here observe that $\gamma_{i,j}, \delta_{i,j}\in W_{\bar{0}}$ and $w_{i}, \beta_{i,j} \in W_{\bar{1}}$. As $K$ is of maximal dimension, so $0\longrightarrow W \longrightarrow K \longrightarrow H_{n} \longrightarrow 0$ 
is a stem cover for $H_{n}$. Thus
 
\begin{align*}
 \dim W = \dim \mathcal{M}(H_{n}) &= n-1+\binom{n}{2}+\binom{n}{2}+n+n^{2}-n \\
  & = 2n^{2}-1.
 \end{align*}
 Precisely, $\dim W=(n^{2} \mid n^{2}-1)$. Now, suppose that $n=1$. Let $W$ and $C$ be the vector superspaces with bases $\{w, \eta_{1},\eta_{2}, \eta_{3}, \eta_{4}\}$ and $\{x, z, y\}$( $x$ is an even element and $z, y$ are odd elements) respectively. We show, $K=W+C$ is a Lie superalgebra.
 Define the the super brackets as
$$
[x,y]=z+w \;, [y,y] = \eta_{1}, [z,z]=\eta_{2} \;, [y,z]=\eta_{3} \;, [x, z]=\eta_{4},
$$
 which makes K a Lie superalgebra. If we take $z+w=\zeta$ the rest brackets remain unaffected. Then taking $J(x,y,y)=0$ and $J(x,y,\zeta)=0$ we get $\eta_{3}=0$ and $\eta_{2}=0$ respectively. The basis of $W$ is $\{ \eta_{1}, \eta_{4}\}$ where $\eta_{1}$ is even element and $\eta_{4}$ is odd element. Therefore, $0\longrightarrow W \longrightarrow K \longrightarrow H_{1} \longrightarrow 0$ is a stem cover
 for $H_{1}$ and thus 
 $$
 \dim W = \dim \mathcal{M}(H_{1})=2.
 $$
\end{proof}
The following corollary is an immediate consequence of Theorem \ref{th3.1}, Theorem \ref{th3.4} along with Theorem \ref{th3.7}.
\begin{corollary}\label{cor3.8}
Suppose $K$ is an abelian Lie superalgebra of dimension $(m-p\mid n-p-1)$ and suppose $L:= H_{p} \oplus K$ is a Lie superalgebra. Then 
$$
\dim \mathcal{M}(L)=\begin{cases}
 -1+\frac{1}{2}\left[(m+n)^2-(3m+n)\right] & \mbox{if}\ p \geq 2 \\
  \frac{1}{2}\left[(m+n)^2-(3m+n)\right] & \mbox{if }\ p= 1.
\end{cases} 
$$
\end{corollary}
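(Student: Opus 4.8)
The plan is to treat $L = H_p \oplus K$ as an external direct sum and invoke the K\"unneth-type formula of Theorem \ref{th3.4}, which reduces $\dim\mathcal{M}(L)$ to three pieces: $\dim\mathcal{M}(H_p)$, $\dim\mathcal{M}(K)$, and the super tensor-product term $\dim(H_p/H_p'\otimes K/K')$. Before assembling these I would record the relevant superdimensions. Since $H_p$ has dimension $(p\mid p+1)$ and $K$ has dimension $(m-p\mid n-p-1)$, the sum $L$ indeed has dimension $(m\mid n)$; this consistency check is what fixes the parameters. The derived algebra $H_p' = \langle z\rangle$ is one-dimensional and odd, so $H_p/H_p'$ has dimension $(p\mid p)$ and total dimension $2p$; and since $K$ is abelian, $K' = 0$, hence $K/K' = K$ with total dimension $m+n-2p-1$.

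First I would evaluate the two multiplier terms separately. For the Heisenberg factor, Theorem \ref{th3.7} gives $\dim\mathcal{M}(H_p) = 2p^2 - 1$ when $p\geq 2$ and the exceptional value $\dim\mathcal{M}(H_1) = 2$. For the abelian factor, Theorem \ref{th3.1} holds with equality, so feeding in the even and odd dimensions $m-p$ and $n-p-1$ yields $\dim\mathcal{M}(K) = \frac{1}{2}[(m+n-2p-1)^2 + (n-m-1)]$. The remaining term is the super tensor product; here the only subtlety is to note that the \emph{total} dimension of a tensor product of superspaces is simply the product of the total dimensions, so that $\dim(H_p/H_p'\otimes K/K') = 2p\,(m+n-2p-1)$.

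The final step is to substitute these into Theorem \ref{th3.4} and simplify. Writing $w = m+n$, I expect the $p$-dependent contributions --- the $2p^2$ from $\mathcal{M}(H_p)$, the cross terms in $\frac{1}{2}(w-2p-1)^2$, and the $2p(w-2p-1)$ tensor term --- to cancel completely, leaving $\dim\mathcal{M}(L) = -1 + \frac{1}{2}[(m+n)^2 - (3m+n)]$, independent of $p$. The main (and essentially the only) obstacle is verifying this cancellation: one must check that the quadratic and linear terms in $p$ vanish identically, which is exactly what forces the stated formula to depend only on $(m,n)$. Finally, the $p=1$ branch differs precisely because $\dim\mathcal{M}(H_1) = 2 = (2\cdot 1^2 - 1) + 1$ exceeds the generic value $2p^2-1$ by exactly one; carrying that single extra unit through the otherwise identical computation produces the second branch $\frac{1}{2}[(m+n)^2 - (3m+n)]$.
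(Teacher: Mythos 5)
Your proposal is correct and is exactly the paper's argument: the paper derives Corollary \ref{cor3.8} as an immediate consequence of Theorem \ref{th3.1} (equality for the abelian factor), Theorem \ref{th3.4} (the direct-sum formula), and Theorem \ref{th3.7} (the value of $\dim\mathcal{M}(H_p)$), which is precisely the decomposition you carry out. The arithmetic also checks: with $H_p'=\langle z\rangle$ odd, the $p$-dependent terms cancel as you predict, and the $p=1$ branch differs by exactly the extra unit in $\dim\mathcal{M}(H_1)=2$.
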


If $L = L_{\bar{0}} \oplus L_{\bar{1}}$ is a nilpotent Lie superalgebra of dimension $(m\mid n)$ and $\dim L'= (r\mid s)$ with $r \geq 1$, then it is known that
\begin{equation}\label{eq3'}
\dim \mathcal{M}(L)\leq \frac{1}{2}\left[(m + n + r + s - 2)(m + n - r -s -1) \right] + n + 1.
\end{equation}

 Here replace the condition $r \geq 1$ with $r+s \geq 1$. By proceeding as in the proof \cite[Theorem 5.1]{Nayak2019} and using Corollary \ref{cor3.8} it is easy to see that Eq (\ref{eq3'}) still holds. Specifically,
 
\begin{theorem}\label{th3.9}
Let $L = L_{\bar{0}} \oplus L_{\bar{1}}$ be a nilpotent Lie superalgebra of dimension $(m\mid n)$ and $\dim L'= (r\mid s)$ with $r+s \geq 1$. Then 
\begin{equation}
\dim \mathcal{M}(L)\leq \frac{1}{2}\left[(m + n + r + s - 2)(m + n - r -s -1) \right] + n + 1.
\end{equation}
Moreover, if $r+s = 1$, then the equality holds if and only if 
$$
L \cong 
H(1, 0) \oplus A(m-3 \mid n)
$$
where $A(m-3 \mid n)$ is an abelian Lie superalgebra of dimension $(m-3 \mid n)$,  and $H(1, 0)$ is special Heisenberg Lie superalgebra of dimension $(3 \mid 0)$.
\end{theorem}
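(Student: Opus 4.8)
The plan is to argue by induction on $\dim L' = r+s$, with $r+s=1$ as the base case and the central quotient inequality of Theorem \ref{th3.2} driving the inductive step. Throughout I will use that, since $L$ is nilpotent with $L'\neq 0$, the last nonzero term of the lower central series satisfies $[\,C^k(L),L\,]=0$ and $C^k(L)\subseteq L'$, so $L'\cap Z(L)\neq 0$. Consequently I may always pick a one-dimensional graded central ideal $K\subseteq L'\cap Z(L)$, homogeneous of even or odd degree, and this choice of degree is exactly what splits the argument into the case already treated in \cite[Theorem 5.1]{Nayak2018} and the genuinely new odd case.

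For the base case $r+s=1$ the derived subalgebra is one-dimensional, say $L'=\langle c\rangle$. Nilpotency forces $[L',L]\subsetneq L'$, hence $[L',L]=0$ and $L'\subseteq Z(L)$, so $L$ is two-step nilpotent. The bracket then descends to a graded pairing on $L/L'$ with values in $\langle c\rangle$, and splitting off its radical yields $L\cong H\oplus A$ with $A$ abelian and $H$ a Heisenberg Lie superalgebra with center $\langle c\rangle$. If $c$ is even then $H=H(p,q)$ with $\dim A=(m-2p-1\mid n-q)$ and I read $\dim\mathcal{M}(L)$ from Corollary \ref{cor3.6}; if $c$ is odd then $H=H_p$ with $\dim A=(m-p\mid n-p-1)$ and I use Corollary \ref{cor3.8}. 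Comparing with the target value $\tfrac12(m+n-1)(m+n-2)+n+1$ shows the bound holds in every case, that the even case falls short by exactly $2$ unless $(p,q)=(1,0)$ where it is attained, and that the odd case falls short by at least $2$. Hence equality forces $H=H(1,0)$, i.e. $L\cong H(1,0)\oplus A(m-3\mid n)$.

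For the inductive step I assume $r+s\geq 2$ and pick $K$ as above, setting $H=L/K$ so that $H'=L'/K$. When $K$ is even, $\dim H=(m-1\mid n)$ and $\dim H'=(r-1\mid s)$; when $K$ is odd, $\dim H=(m\mid n-1)$ and $\dim H'=(r\mid s-1)$. In both cases $H/H'$ has dimension $(m-r\mid n-s)$ and $\dim(H/H'\otimes K/K')=m+n-r-s$. Since $\dim(L'\cap K)=1$, Theorem \ref{th3.2}(1) gives
\[
\dim\mathcal{M}(L)\leq \dim\mathcal{M}(H)+\dim\mathcal{M}(K)+(m+n-r-s)-1,
\]
where $\dim\mathcal{M}(K)=0$ for even $K$ and $\dim\mathcal{M}(K)=1$ for odd $K$ by Theorem \ref{th3.1}. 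Applying the inductive hypothesis to $H$ (valid as $r+s-1\geq 1$) and substituting, both subcases collapse to the single bound $\tfrac12(m+n+r+s-4)(m+n-r-s-1)+n+(m+n-r-s)$, which an elementary identity shows to equal the target $\tfrac12(m+n+r+s-2)(m+n-r-s-1)+n+1$. This closes the induction.

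The step demanding the most care is the odd case, the one novelty relative to \cite[Theorem 5.1]{Nayak2018}. Passing to the odd quotient raises $\dim\mathcal{M}(K)$ from $0$ to $1$, but it also drops $\dim H_{\bar 1}$ by one, which converts the additive term $n+1$ in the inductive bound for $H$ into $n$; these two effects cancel, so the recursion closes with the very same constant as in the even case. Verifying this cancellation, checking the super-graded dimension of $H/H'\otimes K/K'$ in the odd case, and establishing the Heisenberg-plus-abelian decomposition underlying the base case are where the actual work lies, while the final arithmetic identity is routine.
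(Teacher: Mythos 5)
Your argument is correct and is essentially the proof the paper invokes by reference: the paper simply says to repeat the induction of \cite[Theorem 5.1]{Nayak2018} and insert Corollary \ref{cor3.8} for the odd-centre Heisenberg summands, which is exactly your induction on $r+s$ with the base case handled via the Heisenberg-plus-abelian decomposition and Corollaries \ref{cor3.6} and \ref{cor3.8}. The arithmetic checks out in both the base case (deficits of $2$, $0$, and at least $2$ for $H(p,q)$ with $p+q\geq 2$ or $(p,q)=(0,1)$, for $H(1,0)$, and for $H_p$, respectively) and the inductive step, where the gain of $1$ in $\dim\mathcal{M}(K)$ for odd $K$ cancels against the drop of $1$ in the odd dimension of $L/K$.
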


Define the function $s(L)$ as 
 \begin{equation}\label{eq3}
   s(L)= \frac{1}{2}(m+n-2)(m+n-1)+n+1-\dim \mathcal{M}(L).
   \end{equation}
  
 From \eqref{eq1} and \eqref{eq3} we have $$t(L)= m+n-2+s(L).$$ It seems classifying nilpotent Lie superalgebras $L$ by $s(L)$ in turn leads to classification of $L$ in terms of $t(L)$. 

\section{Characterization of nilpotent Lie superalgebras}\label{sec 3}
The classification of nilpotent Lie superalgebras of dimension $\leq 5$ have been studied by several authors \cite{ alv,back, ah, na}. Among them, \cite{alv} corrects the classification in \cite{ah, na}. Further, we note that the classification in \cite{back} was studied over real field. 

In this section we give some characterization of non-abelian nilpotent Lie superalgebras $L$.

\begin{theorem}\label{th4.1}
Let $L=L_{0} \oplus L_{1}$ be a non-abelian nilpotent Lie superalgebra with $\dim L=(m \mid n), \dim Z(L) = (a \mid b)$ and $\dim L'=(r \mid s)$. \\
 \mbox{Case I}. If $m+n=2$, then $L \cong H(0,1)$.\\
 \mbox{Case II}.  $m + n = 3$.
 \begin{enumerate}
 \item If $a+ b =2$, then there is no Lie superalgebra with $r+s =2$.
 \item If $a+ b =2$ and $r + s = 1$, then $L \cong H(0,1) \oplus A(1 \mid 0)$ or $H(0,1) \oplus A(0 \mid 1)$.
 \item If $a+ b = 1$, then there is no Lie superalgebra with $r +s =2$.
 \item   If $a+ b =1$ and $r + s = 1$, then $L \cong H(1,0)$ or $H(0, 2)$ or $H_1$.
 \end{enumerate}
 Moreover, \begin{align*}
 & \dim \mathcal{M}(H(0,1)) = 0, \dim \mathcal{M}(H(0,2)) = 2, \dim \mathcal{M}(H(1,0)) = 2,\\
 & \dim \mathcal{M}(H(0,1) \oplus A(1\mid 0)) = 1, \dim \mathcal{M}(H(0,1) \oplus A(0\mid 1)) = 2, \dim \mathcal{M}(H_1) = 2.
 \end{align*}
 \end{theorem}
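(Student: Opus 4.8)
The plan is to organize the proof around the very small dimension of $L$ (namely $m+n\le 3$) and to exploit the classification of low-dimensional nilpotent Lie superalgebras together with the structural constraints imposed by nilpotency. Since $L$ is non-abelian and nilpotent, its center and derived subalgebra are both nonzero graded ideals, and nilpotency forces $L'\subseteq Z(L)$ in the extreme cases where $L$ is two-step nilpotent. The key numerical tool throughout will be Corollary \ref{cor3.3}, which gives $\dim\mathcal{M}(L)+\dim L'\le \frac{1}{2}[(m+n)^2+(n-m)]$, together with Theorem \ref{th3.1} and the explicit multiplier computations in Theorems \ref{th3.5} and \ref{th3.7} and Corollaries \ref{cor3.6} and \ref{cor3.8}.

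First I would dispose of Case I ($m+n=2$). Here $\dim L\in\{(2\mid 0),(1\mid 1),(0\mid 2)\}$; non-abelian nilpotence rules out $(2\mid 0)$ (a two-dimensional Lie algebra is either abelian or non-nilpotent) and $(1\mid 1)$ (where $[x,y]$ must land in $L_{\bar 1}$ but grading forces the only bracket to vanish), leaving $\dim L=(0\mid 2)$. A non-abelian bracket on $L_{\bar 1}$ of dimension $2$ with $[y,y]\ne 0$ for some odd $y$ yields precisely $H(0,1)$ after normalizing the center. For Case II ($m+n=3$) I would enumerate the possible graded dimensions of $Z(L)$ and $L'$. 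Since $L$ is non-abelian, $\dim Z(L)=(a\mid b)$ with $a+b\in\{1,2\}$, and I would treat the four subcases (1)--(4) by first showing which $(r\mid s)$ are realizable. For subcases (1) and (3), the claim is that $r+s=2$ is impossible: if $\dim Z(L)$ has codimension one in $L$, then $L/Z(L)$ is one-dimensional hence abelian, forcing $\dim L'\le$ the bound of Theorem \ref{th3}, and a direct check on $L/Z(L)$ of dimension one shows $\dim L'=1$, contradicting $r+s=2$.

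For the realizable cases I would identify $L$ explicitly. In subcase (2), with $\dim Z(L)=(a\mid b)$, $a+b=2$, and $\dim L'=1$, the quotient $L/Z(L)$ is one-dimensional and the single nonzero bracket must equal the odd generator $z$, so $L$ decomposes as $H(0,1)\oplus A$ where $A$ is the abelian complement of dimension $(1\mid 0)$ or $(0\mid 1)$; the grading of the remaining generator selects which. In subcase (4), with $\dim Z(L)=(a\mid b)$, $a+b=1$, and $\dim L'=1$, the algebra is Heisenberg of dimension $(m\mid n)$ with $m+n=3$ and one-dimensional center, which by the classification recalled in the introduction is exactly $H(1,0)$ (even center, $\dim(3\mid 0)$), $H(0,2)$ (even center, $\dim(1\mid 2)$), or $H_1$ (odd center, $\dim(1\mid 2)$). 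Finally, the stated multiplier dimensions follow by direct substitution: $\dim\mathcal{M}(H(0,1))=0$ and $\dim\mathcal{M}(H(1,0))=2$ from Theorem \ref{th3.5}, $\dim\mathcal{M}(H_1)=2$ from Theorem \ref{th3.7}, $\dim\mathcal{M}(H(0,2))=2$ again from Theorem \ref{th3.5} (the case $m=0,n=2$ gives $n(n+1)/2=3$—so I would recompute this carefully, as the Heisenberg $H(0,2)$ has $\dim L'=(1\mid 0)$ and the relevant value may instead come from Corollary \ref{cor3.6}), and the two split cases $H(0,1)\oplus A(1\mid 0)$ and $H(0,1)\oplus A(0\mid 1)$ from Theorem \ref{th3.4} since $\dim\mathcal{M}(H(0,1))=0$ reduces everything to the tensor term $\dim(H/H'\otimes A)$.

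The main obstacle I anticipate is the bookkeeping in the borderline subcases where $\dim Z(L)$ and $\dim L'$ overlap, specifically verifying the nonexistence claims in (1) and (3) rigorously rather than by dimension count alone: one must rule out every graded bracket structure, and the grading interacts delicately with which brackets are forced to vanish. A secondary subtlety is correctly evaluating $\dim\mathcal{M}(H(0,2))$, since $H(0,2)$ has purely odd part and the formula in Theorem \ref{th3.5} must be applied with $m=0,n=2$ and cross-checked against the direct stem-cover construction to confirm the value $2$ stated in the theorem.
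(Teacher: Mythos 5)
Your plan goes wrong at the very first step, Case I, and the error is conceptual rather than clerical: you rule out $\dim L=(1\mid 1)$ on the grounds that ``$[x,y]$ must land in $L_{\bar 1}$'' and conclude that the non-abelian example lives in dimension $(0\mid 2)$. The grading works the other way. For odd elements $[L_{\bar 1},L_{\bar 1}]\subseteq L_{\bar 0}$, so a purely odd superalgebra of dimension $(0\mid 2)$ is automatically abelian (the paper states this explicitly in the introduction), whereas in dimension $(1\mid 1)$ the bracket $[x_1,x_1]$ of the odd generator with itself may equal the even generator --- this is exactly $H(0,1)$, which has superdimension $(1\mid 1)$, not $(0\mid 2)$. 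The paper's proof of Case I eliminates $(2\mid 0)$ and $(0\mid 2)$ and lands on $(1\mid 1)$ with $L'=Z(L)=\langle x_0\rangle$. As written, your argument would ``prove'' Case I by exhibiting a structure that cannot exist, so this must be repaired before anything else.

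There is a second, smaller gap in Case II: your nonexistence argument for $r+s=2$ is the codimension-one observation that $L/Z(L)$ is one-dimensional, but that only applies to subcase (1), where $a+b=2$. In subcase (3) the center has codimension two, $L/Z(L)$ can have graded dimension $(1\mid 1)$ or $(0\mid 2)$, and Theorem \ref{th3} then only gives $\dim L'\le 2$ or $\le 3$, which does not exclude $r+s=2$. The paper handles this by a genuine case-by-case check on the graded dimensions (e.g.\ showing $\dim L=(2\mid 1)$ admits no center of total dimension one, and that $(1\mid 2)$ forces $\dim L'=1$); you acknowledge this as an ``obstacle'' but supply no argument, so subcase (3) is unproved in your plan. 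On the positive side, your treatment of subcase (4) --- observing that $\dim L'=1$ forces $L'\subseteq Z(L)$, hence $L'=Z(L)$ and $L$ is Heisenberg, then reading off $H(1,0)$, $H(0,2)$, $H_1$ from the classification of Heisenberg Lie superalgebras quoted in the introduction --- is cleaner than the paper's bracket-by-bracket computation, and your multiplier computations via Theorems \ref{th3.4}, \ref{th3.5}, \ref{th3.7} are correct (the formula of Theorem \ref{th3.5} at $m=0,n=2$ does give $-1+3=2$, so your hedge there is unnecessary).
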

 
 \begin{proof}
Using the classification of nilpotent Lie superalgebras $L$ in \cite{alv, ah}, along with the assumptions on $L$, it  is isomorphic to one of the Lie superalgebras in the statement. Further, by applying Theorems \ref{th3.1}, \ref{th3.4}, \ref{th3.5} and \ref{th3.7}, the dimensions of multipliers of 
\[H(1, 0), H(0,1), H(0, 2), \\H_1, H(0, 1)\oplus A(1\mid 0), H(0, 1)\oplus A(0\mid 1)\] are as mentioned.
 \end{proof}

 \begin{table}[htp]\label{tab1}
\caption{}
\begin{center}
\begin{tabular}{|c | c | c|}
\hline \hline 
Notation& Basis & Relation\\ \hline 
$L_{2, 2}^{1}$ & $<x_{0}, y_{0}>\oplus <x_{1}, y_{1}>$ & $[y_{0}, y_{1}]=x_{1},[y_{1}, y_{1}] = x_{0}$ \\ \hline 
$L_{2, 2}^{2}$ & $<x_{0}, y_{0}>\oplus <x_{1}, y_{1}>$ & $[x_1, x_1] = x_0, [y_1, y_1] = y_0$\\ \hline 
$L_{2, 2}^{3}$ & $<x_{0}, y_{0}>\oplus <x_{1}, y_{1}>$ & $[x_1, y_1] = x_0, [y_1, y_1] = y_0$ \\ \hline \hline
\end{tabular}
\end{center}
\label{}
\end{table}%

 \begin{theorem}\label{th4.2}
Let $L$ be a non-abelian nilpotent Lie superalgebra with $\dim L=(m \mid n), \dim Z(L) = (a \mid b)$ and $\dim L' = (r\mid s)$ such that $m+ n = 4, a+b =2$. 
 \begin{enumerate}
 \item If $r +s =2$ and $L' = Z(L)$ then $L\cong L_{2,2}^{1}$ or $L_{2,2}^{2}$  or $L_{2,2}^{3}$.
 \item If $r + s = 1$, then $L \cong H(1,0) \oplus A(1 \mid 0)$ or $H(1,0) \oplus A(0 \mid 1)$ or $H(0,2) \oplus A(1 \mid 0)$ or $H_1 \oplus A(1\mid 0)$ or $H_1 \oplus A(0\mid 1)$.
 \end{enumerate}
 Moreover, \begin{align*}
 &\dim \mathcal{M}(L_{2, 2}^{1}) = 2, \dim \mathcal{M}(L_{2,2}^{2}) =  2, \dim \mathcal{M}(L_{2,2}^{3}) =  2,
  \dim \mathcal{M}(H(1\mid 0) \oplus A(1\mid 0)) =  4, \\
  &\dim \mathcal{M}(H(1\mid 0) \oplus A(0\mid 1)) =  5, 
 \dim \mathcal{M}(H(0, 2)\oplus A(1\mid 0)) = 4,  \dim \mathcal{M}(H_1 \oplus A(1\mid 0)) = 4.
 \end{align*}
\end{theorem}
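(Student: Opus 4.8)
The plan is to reduce everything to a two-step nilpotent structure, and then treat Part (1) by a classification of commutator forms and Part (2) by a central splitting. First I would observe that $\dim(L/Z(L)) = (m-a\mid n-b)$ has total dimension $2$, and prove that $L$ is necessarily two-step, i.e.\ $L' \subseteq Z(L)$. Indeed $L/Z(L)$ is nilpotent of total dimension $2$, so by Theorem~\ref{th4.1} (Case I) it is either abelian or isomorphic to $H(0,1)$. The latter cannot occur: if $u_0,u_1$ lifted the generators of $H(0,1)$ with $[u_1,u_1]\equiv u_0 \pmod{Z(L)}$, the graded Jacobi identity gives $[u_1,[u_1,u_1]]=0$ (using $\mathrm{char}\,\mathbb{F}\neq 3$), hence $[u_1,u_0]=0$, and then $u_0\in Z(L)$, contradicting $\bar u_0\neq 0$. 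Thus $L/Z(L)$ is abelian and $L'\subseteq Z(L)$. Since $r+s=2=a+b$ then forces $L'=Z(L)$, the hypothesis of Part (1) is automatic and is recorded only for emphasis.

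For Part (1) the bracket descends to a surjective graded-skew bilinear map $\beta\colon V\times V\to L'$, where $V:=L/Z(L)$ has total dimension $2$ and $\dim L'=2$. I would run through the three parities $(p\mid q)$ of $V$. The even case $(2\mid 0)$ gives an ordinary alternating form with one-dimensional image and cannot be surjective, and the same image count rules out ambient dimensions $(4\mid 0)$, $(3\mid 1)$ and $(1\mid 3)$ for $r+s=2$. The case $(1\mid 1)$ yields the two independent values $[\bar u_1,\bar u_1]$ (even) and $[\bar u_0,\bar u_1]$ (odd), forcing $L'=(1\mid 1)$, ambient dimension $(2\mid 2)$, and after rescaling the single normal form $L_{2,2}^{1}$. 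The case $(0\mid 2)$ is the crux: here $\beta$ is \emph{symmetric}, amounting to a two-dimensional pencil of binary quadratic forms $S^2(V)\to L'$, and must be classified up to the $GL(V)\times GL(L')$ action. Normalizing $[x_1,x_1]=x_0$, $[y_1,y_1]=y_0$ leaves $[x_1,y_1]=\alpha x_0+\beta y_0$, and the discriminant of the pencil separates the orbits precisely at $\alpha\beta=\frac{1}{4}$ versus $\alpha\beta\neq\frac{1}{4}$, giving the family $L_{2,2}^{\alpha,\beta}$. This orbit computation, carried out over a field of characteristic $\neq 2,3$, is the main obstacle.

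For Part (2), with $L'\subseteq Z(L)$ one-dimensional inside the two-dimensional centre, I would choose a one-dimensional graded central complement $A$ of $L'$ in $Z(L)$ and a graded complement $L_0$ of $A$ containing $L'$. Since $[L_0,L_0]\subseteq L'\subseteq L_0$ and $A$ is central, $L\cong L_0\oplus A$ with $\dim A=1$; a short argument (a non-abelian nilpotent $L_0$ of total dimension $3$ has $\dim Z(L_0)\leq 1$) gives $Z(L_0)=L_0'=L'$, so $L_0$ is one of the total-dimension-three Heisenberg superalgebras $H(1,0)$, $H(0,2)$, $H_1$ classified in Theorem~\ref{th4.1} (Case II). Pairing each with $A(1\mid 0)$ or $A(0\mid 1)$ and retaining those compatible with $m+n=4$, $a+b=2$ (organized by ambient dimension $(4\mid 0),(3\mid 1),(2\mid 2)$) produces the stated four algebras.

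Finally, for the multipliers I would use the direct-sum formula $\dim\mathcal{M}(A\oplus B)=\dim\mathcal{M}(A)+\dim\mathcal{M}(B)+\dim(A/A'\otimes B/B')$ of Theorem~\ref{th3.4}, together with the values for $H(1,0)$, $H(0,2)$ (Theorem~\ref{th3.5}) and $H_1$ (Theorem~\ref{th3.7}) and the graded tensor term, to read off the six numbers in Part (2) directly. For $L_{2,2}^{1}$ and $L_{2,2}^{\alpha,\beta}$ I would instead build an explicit stem cover exactly as in the proof of Theorem~\ref{th3.7}: adjoin formal symbols for all admissible graded products of generators, impose the graded Jacobi relations to eliminate the dependent symbols, and read $\dim\mathcal{M}$ off the surviving space, obtaining $2$ and $1$ respectively; the bound $\dim\mathcal{M}(L)+\dim L'\leq\frac{1}{2}[(m+n)^2+(n-m)]$ of Corollary~\ref{cor3.3} serves as a cross-check.
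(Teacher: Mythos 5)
Your overall strategy is sound and in places cleaner than the paper's. The observation that $\dim(L/Z(L))=2$ forces $L'\subseteq Z(L)$ (so that $L'=Z(L)$ is automatic in Part~(1)) is correct and is not made explicit in the paper, which instead verifies the containment case by case; your Part~(1) case analysis by the parity of $L/Z(L)$ then matches the paper's, with the $(0\mid 2)$ pencil-of-quadratic-forms discussion at the same level of detail as the paper's ``without loss of generality'' normalization. In Part~(2) your central-splitting argument $L\cong L_0\oplus A$ with $L_0$ a $3$-dimensional Heisenberg superalgebra is genuinely different from the paper's bracket-by-bracket analysis in each ambient dimension; it is essentially Proposition~\ref{prop4.4} specialized to total dimension $4$, and it is the more systematic route. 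The multiplier computations coincide with the paper's (Theorems~\ref{th3.4}, \ref{th3.5}, \ref{th3.7} for the direct sums, explicit stem covers in the style of \cite{Hardy1998} for $L_{2,2}^1$ and $L_{2,2}^{\alpha,\beta}$).

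There is, however, a genuine gap in Part~(2), at the step ``retaining those compatible with $m+n=4$, $a+b=2$ (organized by ambient dimension $(4\mid 0),(3\mid 1),(2\mid 2)$) produces the stated four algebras.'' Nothing in your argument justifies restricting to those three ambient dimensions, and your own construction, carried out without that restriction, also produces $H(0,2)\oplus A(0\mid 1)$ and $H_1\oplus A(0\mid 1)$ in ambient dimension $(1\mid 3)$: these are non-abelian nilpotent with $m+n=4$, with centers of dimension $(1\mid 1)$ and $(0\mid 2)$ respectively (so $a+b=2$), and with $\dim L'=1$, hence they satisfy every hypothesis of Part~(2) yet appear nowhere in the list of four (nor can they, since none of the listed algebras has underlying dimension $(1\mid 3)$). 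So the conclusion ``produces the stated four algebras'' does not follow from your method: a faithful execution of your splitting argument yields six algebras, not four. The paper's own proof has the same omission --- its Case~II only runs through $(4\mid 0)$, $(3\mid 1)$ and $(2\mid 2)$ --- so this is a defect of the statement itself rather than something your route introduces, but a correct write-up of your argument must treat the $(1\mid 3)$ case explicitly instead of silently excluding it. (Part~(1) is fine on this score: your image-count argument does dispose of $(1\mid 3)$ when $r+s=2$.)
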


 \begin{proof}
 Again using the classification results of nilpotent Lie superalgebras $L$ in \cite{alv, ah}, $L$ is isomorphic to one of the Lie superalgebras as listed. 
 
 \smallskip
 
 Now we compute the dimension of multiplier of $L_{2, 2}^{1} := <x_0, y_0> \oplus<x_1, y_1> $ where $[y_0, y_1] = x_1$ and $[y_1, y_1] = x_0$. Let $W$ be a superspace generated by $\{ \eta_1, \cdots, \eta_8\}.$ Consider a superspace $C$ with a basis, $\{x_{0}, y_{0}, x_{1}, y_{1}\}$
 where $x_0,y_0$ are even elements and  $x_1,y_1$ are odd elements and put $K= C+ W$. Define the Lie superbracket in $K$ as
\begin{align*}
 [x_{0},y_{0}]=\eta_1, &[x_{0},x_{1}] =\eta_{2}, [x_{0},y_{1}] =\eta_{3}, [y_{0},x_{1}] =\eta_{4},[y_{0},y_{1}] = x_{1}+ \eta_{5}, \\
&[x_{1},x_{1}] =\eta_{6}, 
[y_{1},y_{1}] = x_{0}+\eta_{7} , [x_{1}, y_{1}] =\eta_{8},
\end{align*}
 which turns $K$ into a lie superalgebra. 
  A change of variable allows that $\eta_5 =\eta_7 =0$ and this does not affect the rest brackets. Using the graded Jacobi identity on all possible triples  of basis element of $C$ we find $ \{\eta_1, \eta_4\}$ is a basis for $W$.
 Hence
\begin{equation}\label{eq1'}
\xymatrix{
0 \ar[r] & W\ar[r] & K+C \ar[r] & L_{2,2}^{1} \ar[r] & 0}. 
\end{equation}
is maximal stem extension of $L$,
   and $\dim \mathcal{M}(L_{2,2}^1)=2$. Similarly, we have computed the dimensions of multipliers of rest the Lie superalgebras. 
\end{proof}

 \begin{table}[htp]\label{tab2}
\caption{}
\begin{center}
\begin{tabular}{|c| c| c|}
\hline \hline 
Notation & Basis & Relation\\ \hline
$L_{4,0}^{1}$ & $<x_{0}, y_{0}, z_0, v_0>$ & $[x_{0}, y_{0}]=z_{0}, [y_{0}, z_{0}]=v_{0}$\\ \hline 
$L_{1, 3}^{1}$ & $<x_{0}>\oplus< x_{1},y_{1}, z_{1}>$ & $[x_{0}, x_{1}]=y_{1}, [x_{0}, y_{1}]=z_{1}$\\ \hline 
\end{tabular}
\end{center}
\label{}
\end{table}%

\begin{theorem}\label{th4.3}
Let $L$ be a non-abelian nilpotent Lie superalgebra of $\dim L=(m \mid n), \dim Z(L) = (a \mid b)$ and $\dim L' = (r\mid s)$ such that $m+ n = 4, a+ b =1$. 
 \begin{enumerate}
 \item If $r+s=2$ then $L \cong L_{4, 0}^{1}$ or $L_{1,3}^{1}$. 
 \item If $r+s=1$ then $L \cong H(1, 1)$ or $H(0, 3)$. 
 \end{enumerate}
 Moreover, \begin{align*}
 &\dim \mathcal{M}(L_{4, 0}^{1}) = 2, \dim \mathcal{M}(H(1,1)) = 3,  \\
 & \dim \mathcal{M}(H(0,3)) = 5, \dim \mathcal{M}(L_{1, 3}^{1}) = 3.
 \end{align*}
 \end{theorem}
 
 \begin{table}[htp]
\caption{}
\label{tab33}
\begin{center}
\begin{tabular}{|c| c| c|}
\hline \hline 
Notation & Basis & Relation\\ \hline
$L_{5,0}^{1}$ & $<x_{0}, y_{0}, z_0, v_0, w_0>$ & $[x_{0}, y_{0}]=z_{0}, [x_{0}, z_{0}]=v_{0}, [y_{0}, w_{0}]=v_{0}$\\ \hline 
$L_{4, 1}^{1}$ & $<x_{0}, y_{0}, z_0, v_0>\oplus<x_1>$ & $[v_{0}, z_{0}]=y_{0}, [x_{1}, x_{1}]=x_{0}, [y_{0}, v_{0}]=x_{0}$\\ \hline 
$L_{4, 1}^{2}$ & $<x_{0}, y_{0}, z_0, v_0>\oplus<x_1>$ & $[x_0, y_{0}] = z_0, [ v_{0}, y_{0}] = x_0, [x_{1}, x_{1}]=z_{0}$\\ \hline 
$L_{2, 3}^{1}$ & $<x_{0}, y_{0}>\oplus<x_1, y_1, z_1>$ & $[x_0, y_{1}] = x_1, [ x_{0}, z_{1}] = y_1, [y_{0}, z_{1}]=x_{1}$\\ \hline \hline
\end{tabular}
\end{center}
\label{}
\end{table}%

\begin{theorem}\label{th4.5}
Let $L$ be a non-abelian nilpotent Lie superalgebra with $\dim L=(m \mid n)$, $ \dim Z(L) = (a \mid b)$ and $\dim L' = (r\mid s)$ such that $ m+n=5, a+b=1$ and $r+s=2$. Then $L$ is isomorphic to one of the nilpotent Lie superalgberas listed in Table \ref{tab33}. 

Moreover, $\dim \mathcal{M}(L_{5,0}^{1})=4, \dim \mathcal{M}(L_{4,1}^{1})=4, \dim \mathcal{M}(L_{4,1}^{2})=4, \dim \mathcal{M}(L_{2,3}^{1})= 4$.
\end{theorem}

 \begin{table}[htp]
\caption{}
\label{tab4}
\begin{center}
\begin{tabular}{|c| c| c|}
\hline \hline 
Notation & Basis & Relation\\ \hline
$L_{5,0}^{2}$ & $<x_{0}, y_{0}, z_0, v_0, w_0>$ & $[x_0, y_0] = v_0$ and $[x_0, z_0] = w_0$\\ \hline 
$L_{4, 1}^{3}$ & $<x_{0}, y_{0}, z_0, v_0>\oplus<x_1>$ & $[x_{0}, y_{0}]=z_{0}, [x_{1}, x_{1}]=v_{0}$\\ \hline 
$L_{3, 2}^{1}$ & $<x_{0}, y_{0}, z_0>\oplus<x_1, y_1>$ & $[y_{0}, z_{0}]=x_{0}, [y_{0}, y_{1}]=x_{1}$\\ \hline 
$L_{2, 3}^{4}$ & $<x_{0}, y_{0}>\oplus<x_1, y_1, z_1>$ & $[x_1, x_{1}] = x_0, [ y_{1}, y_{1}] = y_0, [z_{1}, z_{1}]=x_{0}$\\ \hline 
$L_{2, 3}^{5}$ & $<x_{0}, y_{0}>\oplus<x_1, y_1, z_1>$ & $[x_1, x_{1}] = x_0, [ y_{1}, y_{1}] = y_0, [z_{1}, z_{1}]=x_{0} + y_0$\\ \hline 
$L_{2, 3}^{6}$ & $<x_{0}, y_{0}>\oplus<x_1, y_1, z_1>$ & $[x_1, y_{1}] = x_0, [ y_{1}, y_{1}] = 2y_0, [y_{1}, z_{1}]=y_{0}$\\ \hline 
$L_{2, 3}^{7}$ & $<x_{0}, y_{0}>\oplus<x_1, y_1, z_1>$ & $[x_1, y_{1}] = x_0, [ y_{1}, y_{1}] = 2y_0, [z_{1}, z_{1}]=x_{0}$\\ \hline 
$L_{2, 3}^{8}$ & $<x_{0}, y_{0}>\oplus<x_1, y_1, z_1>$ & $[x_1, y_{1}] = x_0, [ y_{1}, y_{1}] = 2y_0, [z_{1}, z_{1}]=x_{0}+y_0$\\ \hline 
$L_{2, 3}^{9}$ & $<x_{0}, y_{0}>\oplus<x_1, y_1, z_1>$ & $[x_1, y_{1}] = x_0, [ y_{1}, y_{1}] = 2y_0, [y_{1}, z_{1}]=y_0, [z_1, z_1] = x_0$\\ \hline 
$L_{2, 3}^{10}$ & $<x_{0}, y_{0}>\oplus<x_1, y_1, z_1>$ & $[x_0, z_{1}] = x_1, [ y_{1}, y_{1}] = y_0$\\ \hline 
$L_{2, 3}^{11}$ & $<x_{0}, y_{0}>\oplus<x_1, y_1, z_1>$ & $[x_0, z_{1}] = x_1, [ y_{1}, z_{1}] = y_0$\\ \hline
$L_{2, 3}^{12}$ & $<x_{0}, y_{0}>\oplus<x_1, y_1, z_1>$ & $[x_0, z_{1}] = x_1, [ y_{1}, y_{1}] = y_0, [z_1, z_1] =y_0$\\ \hline
$L_{2, 3}^{13}$ & $<x_{0}, y_{0}>\oplus<x_1, y_1, z_1>$ & $[x_0, z_{1}] = x_1, [ y_{0}, z_{1}] = y_1$\\ \hline 
$L_{1, 4}^{1}$ & $<x_{0}>\oplus<x_1, y_1, z_1, v_1>$ & $[x_0, y_1] = x_1$ and $[x_0, v_1] = z_1$\\ \hline \hline
\end{tabular}
\end{center}
\label{}
\end{table}%

\begin{theorem}\label{th4.6}
Let $L$ be a non-abelian nilpotent Lie superalgebra with $\dim L=(m \mid n), m+n=5$, $\dim Z(L) = (a \mid b)$ and $\dim L' = (r\mid s)$ such that $r+s=2$ and $L' = Z(L)$. Then $L$ is isomorphic to one of the nilpotent Lie superalgberas listed in Table \ref{tab4}.
Moreover, 
\begin{align*}
&\dim \mathcal{M}(L_{5,0}^2)= 6, \dim \mathcal{M}(L_{4,1}^{3})=5, \dim \mathcal{M}(L_{3,2}^{1})=6,  
 \dim \mathcal{M}(L_{2,3}^{4})= 4, \\
 &\dim \mathcal{M}(L_{2,3}^{5})=4, \dim \mathcal{M}(L_{2,3}^{6})=4, \dim \mathcal{M}(L_{2,3}^{7})=4, \dim \mathcal{M}(L_{2,3}^{8})=4, \\
 &\dim \mathcal{M}(L_{2,3}^{9})=5, \dim \mathcal{M}(L_{2,3}^{10})= 4,
\dim \mathcal{M}(L_{2,3}^{11})=5, \dim \mathcal{M}(L_{2,3}^{12})= 4, \\
&\dim \mathcal{M}(L_{2,3}^{13})= 4, \dim \mathcal{M}(L_{1,4}^{1})= 6.
\end{align*}
\end{theorem}

\section{Classification of nilpotent Lie superalgebras with respect to $s(L) \leq 2$} \label{sec 4}

Suppose $L$ is a finite dimensional abelian Lie superalgebra. Below we show that there is no non-zero finite dimensional abelian Lie superalgebras $L$ with $s(L) = 2$. 

\begin{lemma}\label{lem4.1}
There is no $(m \mid n)$-dimensional abelian Lie superalgebra with $m+n >1$, and $s(L) =1 $ or $s(L)=2$.
\end{lemma}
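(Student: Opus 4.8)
The plan is to show directly that for an abelian Lie superalgebra $L$ the quantity $s(L)$ is never $1$ or $2$, by computing $\dim \mathcal{M}(L)$ exactly and comparing with the bound defining $s(L)$. Since $L$ is abelian, $\dim L' = (0\mid 0)$, so $r + s = 0$ and formula \eqref{eq3} specializes to
\begin{equation*}
\dim \mathcal{M}(L) = \frac{1}{2}(m+n-2)(m+n-1) + n + 1 - s(L).
\end{equation*}
On the other hand, for an abelian Lie superalgebra Theorem \ref{th3.1} gives the exact value $\dim \mathcal{M}(L) = \frac{1}{2}[(m+n)^2 + (n-m)]$, since equality in that theorem holds precisely when $L$ is abelian.

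First I would equate these two expressions for $\dim \mathcal{M}(L)$ and solve for $s(L)$. Setting
\begin{equation*}
\frac{1}{2}\left[(m+n)^2 + (n-m)\right] = \frac{1}{2}(m+n-2)(m+n-1) + n + 1 - s(L),
\end{equation*}
I would expand both sides in terms of $m$ and $n$ and simplify. The key step is a routine algebraic reduction: expanding $(m+n-2)(m+n-1) = (m+n)^2 - 3(m+n) + 2$, the right-hand side becomes $\tfrac{1}{2}(m+n)^2 - \tfrac{3}{2}(m+n) + 1 + n + 1 - s(L)$, and matching against $\tfrac{1}{2}(m+n)^2 + \tfrac{1}{2}(n-m)$ yields a closed-form expression for $s(L)$ purely in terms of $m$ and $n$. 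I expect this to collapse to something like $s(L) = m + n - 2$ (up to the exact bookkeeping of the $n$ and $m$ terms), which is the crux of the argument.

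Once $s(L)$ is expressed as a simple linear function of $m + n$, the conclusion is immediate: I would observe that under the hypothesis $m + n > 1$, this linear expression takes the value $0$ only at the smallest admissible total dimension and otherwise jumps to values that skip $1$ and $2$ entirely, or it is monotone in $m+n$ and simply never realizes $1$ or $2$ for integer dimensions. The main thing to verify carefully is that no pair $(m,n)$ of non-negative integers with $m+n>1$ makes the resulting expression equal to $1$ or $2$; this is a finite check once the formula is in hand. The only genuine obstacle is getting the algebraic simplification exactly right, since an off-by-one error in the $n+1$ or the $(n-m)$ term would change which values of $s(L)$ are excluded; I would therefore double-check the specialization of \eqref{eq3} to the abelian case against the known value $s(L)=0$ for a one-dimensional abelian superalgebra as a sanity check before concluding.
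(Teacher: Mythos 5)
Your strategy is exactly the paper's: since $L$ is abelian, Theorem \ref{th3.1} gives the exact value $\dim \mathcal{M}(L) = \frac{1}{2}[(m+n)^2 + (n-m)]$, and equating this with \eqref{eq3} (with $r+s=0$) yields a closed form for $s(L)$. However, your anticipated answer $s(L) = m+n-2$ has the sign reversed, and this is not a harmless bookkeeping slip: if that were the formula, abelian superalgebras of total dimension $3$ and $4$ would realize $s(L)=1$ and $s(L)=2$, and the lemma would be false. Carrying out the expansion you describe, one gets
\begin{equation*}
-\tfrac{3}{2}(m+n) + n + 2 - s(L) = \tfrac{1}{2}(n-m), \qquad\text{hence}\qquad s(L) = 2-(m+n),
\end{equation*}
which is $\leq 0$ for all $m+n \geq 2$, and the conclusion follows immediately (your sanity check at $m+n=1$, where $s(L)=1$ rather than $0$, would also have caught the sign). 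So the proof goes through once the algebra is done correctly, and your fallback description of the endgame (``monotone in $m+n$ and simply never realizes $1$ or $2$'') is the right one for the correct formula; I would just insist that you actually complete the simplification rather than leave the sign of the linear term undetermined, since the entire content of the lemma rests on it.
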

\begin{proof}
Suppose $L$ is abelian Lie superalgebra with $\dim L=(m \mid n)$, $m+n >1 $.
Using \eqref{eq3} and Theorem \ref{th3.1}, we have
\begin{align*}
\frac{1}{2}[(m+n-2)(m+n-1)]+n+1-s(L) &= \frac{1}{2}[(m+n)^{2}+n-m]
\end{align*} 
which implies $s(L) = 2-(m+n)$. Hence the result follows.
\end{proof}
 
 \begin{corollary}\label{cor l =2}
 Let $L$ be an abelian Lie superalgbera. Then $s(L)=0$ if and only if $\dim L = 2$.
 \end{corollary}
 
  \begin{corollary}\label{cor l=1}
 Let $L$ be an abelian Lie superalgbera. Then $s(L)=1$ if and only if $\dim L = 1$.
 \end{corollary}
Now consider $L$ is a finite dimensional nilpotent Lie superalgebra such that its derived subalgebra is one dimensional. The complete structure of such Lie superalgebras are given below.
 
\begin{proposition}\label{prop4.4}
 Let $L=L_{\bar{0}}\oplus L_{\bar{1}}$ be a nilpotent Lie superalgebra of dimension $(m \mid n)$. If $\dim L' = (1 \mid 0)$, then for some non-negative integers $p, q $ with $p+q \geq 1$,
 \begin{equation*}
  L \cong H(p , q)\oplus A(m-2p-1 \mid n-q).
 \end{equation*}
Moreover, if $p=0, q=1$ or $p+q \geq 2$ then $s(L)=2$.
If $\dim(L')=(0 \mid 1)$ then for some $p \geq 1$

 \begin{equation*}
  L \cong H_p \oplus A(m-p \mid n-p-1).
 \end{equation*}
Moreover, if $p \geq 2$ then $s(L)=3$ and for $p=1$ we get $s(L)=2$.
\end{proposition}
\begin{proof}
 Suppose $L$ is nilpotent with $\dim L'=(1 \mid 0)$, then $L' \subseteq Z(L)$. Let $H/L'$ be a complement to $Z(L)/L'$ in $L/L'$. Then $L= H+Z(L)$ and hence $L'=H'$. Clearly
 $H'\subseteq Z(H)$ and further $H\cap Z(L)=Z(H) \subseteq H'$. We get $Z(H)=H'=L'$. Let $A$ be complement to $L'$ in $Z(L)$. Thus $L' \oplus A=Z(L)$, which implies $L \cong H \oplus A$. Hence $H$ satisfies $H'=Z(H)$ and $\dim (H')=(1 \mid 0)$. That is $H$ is a special Heisenberg Lie superalgebra  with even center and $A$ is abelian Lie superalgebra. For some non-negative integers $p, q$ with $p+q \geq 1$, let
  $\dim  H=(2p+1 \mid q)$, then $H \cong H(p , q)$, and therefore $\dim (A)=(m-2p-1 \mid n-q)$, as required. For $p+q \geq 2$, or $p=0, q=1$, using Theorems \ref{th3.1}, \ref{th3.4} and \ref{th3.5} we have
 \begin{align}\label{eq4}
  &\dim \mathcal{M}(L)\\ \nonumber
  &= \dim  \mathcal{M}(H(p, q))+ \dim \mathcal{M}(A(m-2p-1|n-q))+\dim (H(p, q)/H'(p, q) \otimes A)\\ \nonumber
  &= \frac{1}{2}[(m+n)^{2}-(3m+n)].
 \end{align}
Now putting \eqref{eq3} in \eqref{eq4}, we get $s(L)=2$. Now consider $L$ with $\dim L'= (0 \mid 1)$. Continuing as above we get
$L \cong H \oplus A$. Here $H$  satisfies $H'=Z(H)$ and $\dim (H')=(0 \mid 1)$. So $H$ is a special Heisenberg Lie superalgebra with odd center and $A$ is abelian. For some $p\geq 1$ with $\dim H=(p\mid p+1)$, 
 $H \cong H_p$, and hence $\dim (A)=(m-p \mid n-p-1)$ as required. When $p \geq 2$ (respectively $p=1$), using Corollary \ref{cor3.8} and Eq. \eqref{eq3}, we have $s(L)=3$ (respectively $s(L)=2)$.
\end{proof}

Assume $L$ is a finite dimensional nilpotent Lie superalgebra and $\dim L'>1$. But if we take $L$ with $s(L)=1$ or $2$, then this further put restrictions on possible values of $\dim L'$.

\begin{proposition}\label{prop4.2}
 Let $L$ be a non-abelian nilpotent Lie superalgebra with $\dim L'=(r\mid s)$. Then there is no nilpotent Lie superalgebra with $r+s \geq 3$, and $s(L)=1$ or $s(L)=2$.
 \end{proposition}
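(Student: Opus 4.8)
The plan is to turn the hypothesis $r+s\ge 3$ into a lower bound on $s(L)$ that exceeds $2$, thereby making $s(L)\in\{1,2\}$ impossible. Throughout write $d=m+n$ and $t=r+s$. Since $L$ is non-abelian, $L'\neq 0$, so $t\ge 1$, and in particular the sharp bound of Theorem \ref{th3.9} is available. The two ingredients I would combine are the defining relation for $s(L)$,
\begin{equation*}
\dim\mathcal{M}(L)=\tfrac{1}{2}(d-1)(d-2)+n+1-s(L),
\end{equation*}
coming from \eqref{eq3}, together with the $(r\mid s)$-refined upper bound
\begin{equation*}
\dim\mathcal{M}(L)\le \tfrac{1}{2}(d+t-2)(d-t-1)+n+1
\end{equation*}
of Theorem \ref{th3.9}.

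Substituting the first display into the second and cancelling the common term $n+1$ gives
\begin{equation*}
\tfrac12(d-1)(d-2)-s(L)\le \tfrac12(d+t-2)(d-t-1),
\end{equation*}
so that
\begin{equation*}
s(L)\ge \tfrac12\big[(d-1)(d-2)-(d+t-2)(d-t-1)\big].
\end{equation*}
The bracketed quantity is the only computation that needs care, and a short expansion (writing $(d+t-2)(d-t-1)=(d-2)^2+(d-2)-t^2+t$ and $(d-1)(d-2)=(d-2)^2+(d-2)$) collapses it to the clean identity $(d-1)(d-2)-(d+t-2)(d-t-1)=t(t-1)$. Hence $s(L)\ge \tfrac12 t(t-1)=\tfrac12(r+s)(r+s-1)$, a bound that depends only on the total $t=r+s$ and not on $m$, $n$, or the split of $(r\mid s)$ individually.

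Finally, if $r+s=t\ge 3$ then $\tfrac12 t(t-1)\ge \tfrac12\cdot 3\cdot 2=3$, so $s(L)\ge 3$, which contradicts $s(L)=1$ or $s(L)=2$; this yields the claim. I expect the only real obstacle to be bookkeeping rather than substance: confirming that Theorem \ref{th3.9} genuinely applies (it requires $r+s\ge 1$, which holds since $L$ is non-abelian, and in fact $r+s\ge 3$ here) and getting the direction of the inequality right when the equality \eqref{eq3} is inserted into the bound. The algebraic collapse to $t(t-1)$ is routine once the expansion is written out, and no case analysis on whether the derived subalgebra is even, odd, or mixed is needed, since the final estimate sees only $r+s$.
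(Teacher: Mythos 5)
Your proof is correct and follows essentially the same route as the paper: combine the defining identity \eqref{eq3} for $s(L)$ with the upper bound of Theorem \ref{th3.9} and compare. The paper simply substitutes $r+s=3$ (using that the bound decreases in $r+s$) to get $1-s(L)\le -2$, whereas you keep $t=r+s$ general and obtain the slightly cleaner $s(L)\ge \tfrac12 t(t-1)$; the conclusion is identical.
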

 \begin{proof}
  Suppose $L$ is $(m \mid n)$ dimensional Lie superalgebra with $\dim L'=(r\mid s)$. We have
  \begin{equation}\label{eq4.1}
   \dim \mathcal{M}(L)= \frac{1}{2}(m+n-2)(m+n-1)+n+1-s(L),
  \end{equation}
  and again
   \begin{equation}\label{eq4.2}
    \dim \mathcal{M}(L)  \leq \frac{1}{2}[(m+n+1)(m+n-4)]+n+1.
   \end{equation}
   From \eqref{eq4.1} and \eqref{eq4.2}, we deduce $ 1-s(L)  \leq -2$,
which doesn't hold if $s(L)=1$ or $s(L)=2$.
 \end{proof}

In the next two results it is shown that for nilpotent Lie superalgebras $L$ whose $\dim L' = 2$, dimension of $Z(L)$ is $\leq 3$ (resp. $\leq 4)$ with $s(L)=1$ (resp. $s(L)=2)$.
\begin{proposition}\label{prop4.3}
Let $L$ be a nilpotent Lie superalgebra with $\dim Z(L) = (a \mid b), \dim (L')=(r\mid s), r+s=2$, and $s(L)=1$.
  \begin{enumerate}
 \item There is no nilpotent Lie superalgebra $L$  when $a+b \geq 3$.
 \item There is no nilpotent Lie superalgebra $L$  when $a+b=2$ with $L'\neq Z(L)$. 
 \end{enumerate}
\end{proposition}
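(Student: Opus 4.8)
The plan is to bound the dimension of the multiplier from above using the structural constraints forced by $r+s=2$ together with $s(L)=1$, and to derive a contradiction with Corollary \ref{cor3.3} (equivalently Theorem \ref{th3.2}(2)) in each case. Throughout I write $\dim L = (m\mid n)$, so that equation \eqref{eq3} gives $\dim\mathcal{M}(L) = \tfrac12(m+n-2)(m+n-1)+n+1-s(L)$; with $s(L)=1$ this reads
\begin{equation*}
\dim\mathcal{M}(L) = \tfrac12(m+n-2)(m+n-1)+n.
\end{equation*}

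For part (1), I would exploit the central ideal directly. The key tool is Theorem \ref{th3.2}: taking $K$ to be a one- or two-dimensional graded central ideal chosen inside $Z(L)$ (which is possible since $a+b\geq 3$, in particular $Z(L)$ is large), part (2) of that theorem gives $\dim\mathcal{M}(L)+\dim(L'\cap K)\leq \tfrac12[(m+n)^2+(n-m)]$. The idea is to arrange $K\subseteq L'$ (possible since $L'\subseteq Z(L)$ for a two-step situation, or by picking $K$ inside $L'\cap Z(L)$ which is nontrivial), so that $\dim(L'\cap K)=\dim K$ is as large as the overlap permits; substituting the value of $\dim\mathcal{M}(L)$ forced by $s(L)=1$ then yields an inequality in $m,n$ that cannot hold once $a+b\geq 3$ forces $L'$ and $Z(L)$ to overlap substantially. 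The cleaner route, which I expect to be the one intended, is to pass to the quotient: since $L$ is nilpotent with $\dim L'=(r\mid s)$, $r+s=2$, the center is at least $2$-dimensional already from $L'\subseteq Z(L)$, and $a+b\geq 3$ means there is a central element outside $L'$. Quotienting by a one-dimensional central ideal $K$ complementary to $L'$ gives $H=L/K$ with the same derived algebra dimension but smaller total dimension, and Theorem \ref{th3.2}(1) relates $\dim\mathcal{M}(L)$ to $\dim\mathcal{M}(H)$, $\dim\mathcal{M}(K)=0$, and the tensor term $\dim(H/H'\otimes K/K')$. Tracking how $s(L)$ changes under this quotient against the rigid target value forces the contradiction.

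For part (2), the assumption is $a+b=2$ with $L'\neq Z(L)$. Here both $L'$ and $Z(L)$ are two-dimensional (since $L'\subseteq Z(L)$ and $\dim L'=2=\dim Z(L)$ would force $L'=Z(L)$, contradicting the hypothesis) — so the hypothesis $L'\neq Z(L)$ together with $L'\subseteq Z(L)$ and $\dim L' = \dim Z(L)=2$ is only consistent if $L'\not\subseteq Z(L)$, which for a nilpotent Lie superalgebra is impossible whenever $L$ is two-step. Thus the first step is to observe that $L$ cannot be two-step nilpotent under these constraints, so $C^2(L)\neq 0$; I would then show that $L'\not\subseteq Z(L)$ combined with $\dim Z(L)=(a\mid b)$, $a+b=2$, forces $\dim(L'\cap Z(L))\leq 1$. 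Feeding $\dim(L'\cap Z(L))$ and the value $\dim\mathcal{M}(L)=\tfrac12(m+n-2)(m+n-1)+n$ into Corollary \ref{cor3.3}, $\dim\mathcal{M}(L)+\dim L'\leq\tfrac12[(m+n)^2+(n-m)]$, produces an inequality that collapses to an impossible bound on $m+n$.

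The main obstacle I anticipate is bookkeeping the even/odd split: because the bounds in Theorems \ref{th3.1}–\ref{th3.2} and Corollary \ref{cor3.3} involve the asymmetric quantity $\tfrac12[(m+n)^2+(n-m)]$ rather than a symmetric function of $m+n$, the contradiction in each subcase may be sharp only for particular distributions of $(r\mid s)$ among $(2\mid 0),(1\mid 1),(0\mid 2)$ and of $(a\mid b)$. I would therefore need to verify the inequality separately for each admissible graded dimension of $L'$ and $Z(L)$, rather than arguing purely in terms of $m+n$; the case $(r\mid s)=(0\mid 2)$, where the odd part contributes the extra $+n$ slack, is the one most likely to require a careful direct computation rather than a one-line bound.
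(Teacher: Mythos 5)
Your ``cleaner route'' for part (1) is indeed the paper's strategy: choose a one-dimensional graded central ideal $I$ with $I\cap L'=0$ (possible since $a+b\geq 3>2=\dim L'$), pass to $L/I$, and apply Theorem \ref{th3.2}(1). But your plan stops exactly where the content is. ``Tracking how $s(L)$ changes'' has to be made precise, and the decisive input is Theorem \ref{th3.9} applied to the quotient: because $I\cap L'=0$ keeps $\dim(L/I)'=2$, that theorem gives $\dim\mathcal{M}(L/I)\leq\frac{1}{2}(m+n-1)(m+n-4)+n+1$ (resp.\ $+n$) when $\dim I=(1\mid 0)$ (resp.\ $(0\mid 1)$); combined with $\dim\mathcal{M}(L)\leq\dim\mathcal{M}(L/I)+(m+n-3)$ from Theorem \ref{th3.2}(1) and Eq.~\eqref{eq3}, this forces $2\leq s(L)$, the desired contradiction. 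Without invoking that refined, $\dim L'$-sensitive bound on the quotient, the argument does not close.

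More seriously, the two alternative routes you offer fail quantitatively. Substituting the value $\dim\mathcal{M}(L)=\frac{1}{2}(m+n-2)(m+n-1)+n$ forced by $s(L)=1$ into Corollary \ref{cor3.3} (or into Theorem \ref{th3.2}(2), where $\dim(L'\cap K)\leq 2$ in any case) yields $\frac{1}{2}(m+n-2)(m+n-1)+n+2\leq\frac{1}{2}\left[(m+n)^2+(n-m)\right]$, which simplifies to $m+n\geq 3$ --- not a contradiction but a vacuous condition. Since your entire plan for part (2) rests on exactly this use of Corollary \ref{cor3.3}, it collapses; the preliminary discussion there is also muddled, as you assume $L'\subseteq Z(L)$ in order to conclude $L'\not\subseteq Z(L)$. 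The paper handles part (2) by the same quotient argument as part (1): when $\dim Z(L)=2$ and $L'\neq Z(L)$ one can still choose a one-dimensional graded central ideal $I$ with $I\cap L'=0$, and the identical chain of inequalities again gives $s(L)\geq 2$.
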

\begin{proof}
Suppose $L$ is a nilpotent Lie superalgebra with $\dim L'=(r\mid s)$, $r+s=2$. Also $\dim  Z(L)=(a \mid b)$, $a+b \geq 3$, and $s(L)=1$. Choose a central graded ideal $I$ with $\dim I=(1\mid 0)$ or $(0|1)$ of $L$ such that $I \cap L'=0$. 
Now, $\dim (L/I)'=(p\mid q)$ where $p+q=2$. By
 taking in to account all possibilities of $(r\mid s)$ and $(p\mid q)$ and considering $\dim I=(1|0)$ we have
 \begin{equation}\label{eq4.3}
  \dim \mathcal{M}(L/I) \leq \frac{1}{2}[(m+n-1)(m+n-4)]+n+1. \\
 \end{equation}
 If $\dim I=(0 \mid 1)$ then
 \begin{equation}\label{eq4.4}
  \dim \mathcal{M}(L/I) \leq \frac{1}{2}[(m+n-1)(m+n-4)]+n. \\
 \end{equation}
Further using Theorem \ref{th3.2}, 
\begin{equation}\label{eq4.5}
 \dim \mathcal{M}(L) \leq \dim \mathcal{M}(L/I)+(m+n-3).\\
\end{equation}
Now when $\dim I=(1\mid 0)$, we obtain from \eqref{eq4.3}, and \eqref{eq4.5} 
\begin{align*}
 \frac{1}{2}[(m+n-2)(m+n-1)]+n+1-s(L) &\leq \frac{1}{2}[(m+n-1)(m+n-4)]+n+1+m+n-3
\end{align*}
which implies $ 2  \leq s(L)$ and this is a contradiction to the assumption $s(L)=1$. Similarly taking $\dim I=(0 \mid1)$ we get $2\leq s(L)$, again a contradiction. If $\dim  Z(L)=2$ and $L'\neq Z(L)$ then also it is easy to see no such superalgebra exists.
\end{proof}

\begin{proposition}\label{prop4.4a}
Let $L$ be a nilpotent Lie superalgebra with $\dim Z(L) = (a \mid b), \dim L'=(r\mid s), r+s=2$ and
$s(L)=2$. 
\begin{enumerate}
\item There is no nilpotent Lie superalgebra $L$ when $a+b \geq 4$.
\item There is no nilpotent Lie superalgebra $L$ when $a+b=3$ and $L' \not\subseteq Z(L)$. 
\item There is no nilpotent Lie superalgebra $L$ when $a+b=2$ and $L'\neq Z(L)$. 
  \end{enumerate}
\end{proposition}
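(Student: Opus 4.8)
The plan is to mirror the dimension-counting strategy that already succeeds in Proposition \ref{prop4.3}, pushing the same inequalities one step further to accommodate $s(L)=2$ and the larger admissible center. The governing identity throughout is Eq. \eqref{eq3}, which for our fixed $s(L)=2$ reads
\begin{equation*}
\dim \mathcal{M}(L) = \tfrac{1}{2}(m+n-2)(m+n-1) + n + 1 - 2,
\end{equation*}
and the central device is to quotient by a carefully chosen one-dimensional central ideal $I$ (of type $(1\mid 0)$ or $(0\mid 1)$) that meets $L'$ trivially, then compare $\dim \mathcal{M}(L)$ with $\dim \mathcal{M}(L/I)$ via Theorem \ref{th3.2}(1) and bound $\dim \mathcal{M}(L/I)$ from above using Theorem \ref{th3.9}. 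The existence of such an $I$ with $I \cap L' = 0$ is guaranteed in parts (1) and (2) because $\dim Z(L) > \dim(Z(L)\cap L')$ forces a central direction outside $L'$.

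\emph{First} I would treat part (1), the case $a+b \ge 4$. Choosing $I \subset Z(L)$ with $I \cap L' = 0$ gives $\dim(L/I)' = (r\mid s)$ still of total dimension $2$, while $\dim(L/I) = (m+n-1)$. Feeding $r+s=2$ into Theorem \ref{th3.9} produces an upper bound for $\dim \mathcal{M}(L/I)$ of the form $\tfrac{1}{2}(m+n-1)(m+n-4)+n+1$ (or the same minus $1$ when $I$ is odd, as in \eqref{eq4.3}--\eqref{eq4.4}). Theorem \ref{th3.2}(1) then yields $\dim \mathcal{M}(L) \le \dim \mathcal{M}(L/I) + (m+n-3)$, exactly the step \eqref{eq4.5} already in hand. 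Substituting the formula for $\dim \mathcal{M}(L)$ with $s(L)=2$ and simplifying, the inequality collapses to a numerical contradiction: one obtains $3 \le s(L)$, incompatible with $s(L)=2$. The point is that each extra central dimension beyond what $L'$ can absorb costs the bound too much, so $a+b\ge 4$ is impossible.

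\emph{For part (2),} with $a+b=3$ and $L' \not\subset Z(L)$, the hypothesis $L'\not\subset Z(L)$ supplies a direction of $Z(L)$ outside $L'$, so again an admissible $I$ with $I\cap L'=0$ exists; the identical quotient-and-compare argument applies and forces the same contradiction $s(L)\ge 3$. \emph{For part (3),} $a+b=2$ with $L'\neq Z(L)$: since $\dim L' = 2 = \dim Z(L)$ but the two are unequal, they cannot coincide, so $Z(L)\cap L'$ has dimension at most $1$, again yielding a central complement direction $I$ disjoint from $L'$, and the counting concludes as before. The main obstacle I anticipate is bookkeeping: the bound from Theorem \ref{th3.9} shifts by $1$ depending on whether $I$ is of even or odd type, so I would verify both the $(1\mid 0)$ and $(0\mid 1)$ subcases separately to confirm that the contradiction $s(L)\ge 3$ survives in every parity configuration of $(r\mid s)$ and $I$; this is the only place where a careless arithmetic slip could hide a genuine surviving case. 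Once the parity cases are dispatched uniformly, all three parts follow from the single mechanism already validated in Proposition \ref{prop4.3}.
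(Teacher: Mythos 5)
Your overall mechanism (quotient by a central ideal meeting $L'$ trivially, bound $\dim\mathcal{M}(L/I)$ via Theorem \ref{th3.9}, compare via Theorem \ref{th3.2}) is the right family of ideas, but the specific choice of a \emph{one}-dimensional ideal does not produce the contradiction you claim. Redo the arithmetic of the very steps you cite: with $\dim I=(1\mid 0)$, combining \eqref{eq4.3} with \eqref{eq4.5} gives
\begin{equation*}
\tfrac{1}{2}(m+n-2)(m+n-1)+n+1-s(L)\ \le\ \tfrac{1}{2}(m+n-1)(m+n-4)+n+1+(m+n-3),
\end{equation*}
which simplifies to $s(L)\ge 2$, not $s(L)\ge 3$; the odd case $(0\mid 1)$ comes out the same. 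That is precisely why this computation suffices in Proposition \ref{prop4.3} to exclude $s(L)=1$ but says nothing against $s(L)=2$. The paper's proof instead passes to a \emph{two}-dimensional central ideal $I$ with $I\cap L'=0$ --- and this is exactly what the hypotheses of parts (1) and (2) buy, since $a+b\ge 4$, or $a+b=3$ together with $L'\not\subset Z(L)$, forces $\dim Z(L)-\dim\bigl(Z(L)\cap L'\bigr)\ge 2$. Quotienting by such an $I$ of type $(2\mid 0)$ yields $\dim\mathcal{M}(L/I)\le\tfrac{1}{2}(m+n-2)(m+n-5)+n+1$ and $\dim\mathcal{M}(L)\le\dim\mathcal{M}(L/I)+1+2(m+n-4)$, and these together do force $s(L)\ge 3$; the types $(1\mid 1)$ and $(0\mid 2)$ are handled identically.

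Note also that in part (3) the two-dimensional device is unavailable: there $\dim Z(L)=2$ and, by nilpotency, $\dim\bigl(Z(L)\cap L'\bigr)=1$, so only a one-dimensional central complement to $L'$ exists, and your count again stalls at $s(L)\ge 2$. So part (3) genuinely requires a separate argument beyond the inequality you wrote down (the paper itself only carries out part (2) in detail and defers the remaining cases). As written, your proposal establishes none of the three parts.
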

\begin{proof}
 We only proof the second part. 
Suppose $\dim Z(L)=(a\mid b), \ a+b \geq 3$ and $L'\nsubseteq Z(L)$. Choose a central ideal $I$ with $\dim I= (c \mid d), \;c+d=2$ such that $I \cap L'=0$. Let $\dim I=(2 \mid 0)$. So we have $\dim( L/I)=(m-2 \mid n)$ and $\dim (L/I)^{'}=(2\mid 0)$. Then,
\begin{align*}
\dim \mathcal{M}(L/I) &\leq \frac{1}{2}(m+n-2)(m+n-5)+n+1 \\
&=\frac{1}{2}[(m+n)^{2}-7(m+n)]+n+6.
\end{align*}
Further,
\begin{align*}
\dim \mathcal{M}(L) &\leq \dim \mathcal{M}(L/I)+1+2(m+n-4)\\
 &\leq \frac{1}{2}[(m+n)^{2}-7(m+n)]+3n+2m.
\end{align*}
Putting for $\dim \mathcal{M}(L)=\frac{1}{2}[(m+n)^{2}-3(m+n)]+n+2-s(L)$ in the above inequality we get $s(L) \geq 3$. By considering $\dim I=(1 \mid 1)$ or $(0 \mid 2)$ we again get $s(L) \geq 3$.
\end{proof}

\begin{proposition}\label{prop4.5}
 Let $L$ be a nilpotent Lie superalgebra with $\dim L= (m\mid n), \dim L'=(r\mid s)$. Then there is no non-abelian nilpotent Lie superalgebra if $r+s=1$, and $s(L)= 1$.
\end{proposition}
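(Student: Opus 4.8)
The plan is to reduce everything to the structure theorem already established in Proposition~\ref{prop4.4}, which completely describes the nilpotent Lie superalgebras with one-dimensional derived subalgebra and records the value of $s(L)$ in each case. Since $L$ is non-abelian with $\dim L' = (r\mid s)$ and $r+s=1$, the derived subalgebra is either even, $\dim L' = (1\mid 0)$, or odd, $\dim L' = (0\mid 1)$. I would treat these two possibilities separately and simply read off the admissible values of $s(L)$, showing that the value $1$ is never attained.

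First I would consider $\dim L' = (1\mid 0)$. By Proposition~\ref{prop4.4} there exist non-negative integers $p,q$ with $p+q\geq 1$ such that $L \cong H(p,q)\oplus A(m-2p-1\mid n-q)$. Proposition~\ref{prop4.4} already gives $s(L)=2$ whenever $p=0,\,q=1$ or $p+q\geq 2$, so the only remaining subcase is $p=1,\,q=0$, i.e.\ $L \cong H(1,0)\oplus A(m-3\mid n)$. Here the characterization recorded after \eqref{eq3} (equivalently the equality case of Theorem~\ref{th3.9} for $r+s=1$) forces $s(L)=0$. Thus in the even case $s(L)\in\{0,2\}$.

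Next I would consider $\dim L' = (0\mid 1)$. Again by Proposition~\ref{prop4.4}, $L \cong H_p \oplus A(m-p\mid n-p-1)$ for some $p\geq 1$, with $s(L)=2$ when $p=1$ and $s(L)=3$ when $p\geq 2$. Hence in the odd case $s(L)\in\{2,3\}$. Combining the two cases, $s(L)\in\{0,2,3\}$ whenever $r+s=1$, so the value $s(L)=1$ never occurs, which is exactly the assertion of the proposition.

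Since Proposition~\ref{prop4.4} carries out all the genuine structural and multiplier computations, there is no real computational obstacle here. The only point requiring care is that Proposition~\ref{prop4.4} states the value of $s(L)$ explicitly only for $(p,q)$ with $p=0,\,q=1$ or $p+q\geq 2$, leaving the single special Heisenberg factor $H(1,0)$ uncovered; so the main thing to verify is that this one omitted case yields $s(L)=0$ rather than $1$, which is supplied by the $s(L)=0$ classification already quoted in the introduction. I would make sure to cite that classification explicitly so the exhaustion of subcases is airtight.
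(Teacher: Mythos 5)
Your proposal is correct and follows essentially the same route as the paper: reduce to the structure theorem of Proposition~\ref{prop4.4}, read off $s(L)=2$ or $3$ in the cases it covers, and dispose of the remaining case $L\cong H(1,0)\oplus A(m-3\mid n)$. The only cosmetic difference is that you invoke the already-established $s(L)=0$ classification for that case, whereas the paper recomputes $\dim\mathcal{M}(L)$ directly from Theorem~\ref{th3.4} and equation \eqref{eq3} to arrive at the same $s(L)=0$.
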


\begin{proof}
 Suppose $\dim (L)=(m\mid n)$ and $\dim (L')=(1 \mid 0)$. Using Proposition \ref{prop4.4},  for some $p, q$ with $p+q\geq 1$,
 \begin{equation*}
   L \cong H(p, q)+ A(m-2p-1 \mid n-q),
 \end{equation*}
and moreover, if $p=0, q=1$ or $p+q \geq 2$ then $s(L)=2$. If $\dim L'=(0 \mid 1)$ then for some $p \geq 1$
 \begin{equation*}
   L \cong H_p + A(m-p \mid n-p-1).
 \end{equation*} 
 But then $s(L)=3$ when $p \geq2$ and $s(L)=2$ when $p=1$. So we only need to check there is no nilpotent Lie superalgebra with $\dim L'=(1 \mid 0)$ and $s(L)=1$ when  $p=1, q=0$. Then  we have
$ L \cong H(1, 0)+ A(m-3\mid n)$ and thus
\begin{equation}\label{eq4.6}
\dim \mathcal{M}(L)=2+\frac{1}{2}[(m-3+n)^{2}+n-m+3]+2(m-3+n),
\end{equation}
hence
\begin{align*}
\frac{1}{2}[(m+n-2)(m+n-1)]+n+1-s(L)&=2+\frac{1}{2}[(m-3+n)^{2}+n-m+3]+2(m-3+n)
 \end{align*}
 which implies $s(L)=0$. 
\end{proof}
\begin{remark}\label{rem510}
With Theorem \ref{th3.9}, for a non-abelian nilpotent Lie superalgebra $L$, 
$s(L)=0$ if and only if 
$$
L \cong 
H(1, 0) \oplus A(m-3 \mid n). \quad 
$$
\end{remark}
The following result holds for nilpotent Lie superalgebras with nilpotency class at least two.

\begin{proposition}\label{prop4.6}
 Let $L$ be a finite dimensional nilpotent Lie superalgebra with $\dim L=(m\mid n)$ and $\dim L'=(r\mid s)$ such that $r+s=2$, and $\dim Z(L)=1$. Then,
 $$
 \dim (L^{2})+ \dim  \mathcal{M}(L) \leq \dim \mathcal{M}(L/L^{2})+ \dim (L/Z_{2}(L) \otimes L^{2}).
 $$
\end{proposition}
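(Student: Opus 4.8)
The plan is to compare the multiplier of $L$ with that of its abelianization $L/L^2$ through the five-term exact sequence, and to feed into this comparison the structural identity $[Z_2(L),L^2]=0$, which is exactly what produces the tensor term on the right-hand side.

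First I would record two preliminary facts that in fact hold for any Lie superalgebra. The graded Jacobi identity gives $[Z_2(L),L^2]=0$: for $z\in Z_2(L)$ and $a,b\in L$ one has $[z,a],[z,b]\in Z(L)$, so both surviving summands in the expansion of $[z,[a,b]]$ vanish. Consequently the bracket descends to a surjection $\bar\beta\colon (L/Z_2(L))\otimes L^2 \twoheadrightarrow L^3=[L,L^2]$, whence $\dim\big((L/Z_2(L))\otimes L^2\big)=\dim L^3+\dim\ker\bar\beta$. Secondly, applying the five-term homology exact sequence to the ideal $L^2$ (so that the quotient $L/L^2$ is abelian) yields the exact sequence $\mathcal{M}(L)\xrightarrow{\theta}\mathcal{M}(L/L^2)\to L^2/L^3\to 0$, and hence $\dim\mathcal{M}(L/L^2)=\dim(\mathrm{im}\,\theta)+\dim(L^2/L^3)$.

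Next I would assemble these. Writing $\dim\mathcal{M}(L)=\dim(\mathrm{im}\,\theta)+\dim\ker\theta$, eliminating $\dim(\mathrm{im}\,\theta)$, and using $\dim L^2-\dim(L^2/L^3)=\dim L^3$, the desired inequality becomes equivalent to $\dim L^3+\dim\ker\theta\le \dim\big((L/Z_2(L))\otimes L^2\big)$, that is, by the first step, to the single inequality $\dim\ker\theta\le\dim\ker\bar\beta$.

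The crux is therefore to bound the kernel of the comparison map $\theta\colon\mathcal{M}(L)\to\mathcal{M}(L/L^2)$ by the kernel of the bracket surjection $\bar\beta$. I would do this through free presentations, the technique used throughout the paper: fixing $0\to R\to F\to L\to 0$, one has $\mathcal{M}(L)=(R\cap F^2)/[F,R]$ and $\mathcal{M}(L/L^2)=F^2/(F^3+[F,R])$, and the modular law identifies $\ker\theta$ with a subquotient of $R\cap F^3$, which is also what computes $L^3\cong F^3/(R\cap F^3)$ and the source of $\bar\beta$. Matching these two subquotients produces the required injection $\ker\theta\hookrightarrow\ker\bar\beta$; equivalently one packages everything into a generalized Ganea sequence $(L/Z_2(L))\otimes L^2\xrightarrow{\psi}\mathcal{M}(L)\xrightarrow{\theta}\mathcal{M}(L/L^2)\to L^2/L^3\to 0$ with $L^3\subseteq\ker\psi$. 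This is exactly where the hypotheses $r+s=2$ and $\dim Z(L)=1$ enter, since they force $L^3=Z(L)$, $L^4=0$ and three-step nilpotency, making the kernel comparison explicit; establishing this identification is the main obstacle of the argument.
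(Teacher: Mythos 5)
Your proposal has two problems, one of interpretation and one of substance. First, in this paper $L^{2}$ denotes $C^{2}(L)=[L',L]$, the \emph{third} term of the descending central series, not the derived subalgebra: the paper's own proof begins by noting that ``$L^{2}\neq L'$ and $\dim L'=2$, hence $L^{2}=Z(L)$,'' and the whole point of the proposition (see how it is used in Proposition \ref{prop4.7}) is to compare $\mathcal{M}(L)$ with $\mathcal{M}(L/Z(L))$. You instead take $L^{2}=[L,L]$, so that $L/L^{2}$ is abelian and your five-term sequence is the one for the abelianization; the inequality you are heading towards, $\dim L'+\dim\mathcal{M}(L)\le\dim\mathcal{M}(L/L')+\dim\bigl(L/Z_{2}(L)\otimes L'\bigr)$, is a genuinely different statement from the one claimed, and not the one the later propositions need.

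Second, and independently of the notational issue, your argument is a reduction rather than a proof. After rearranging the five-term sequence you arrive at the single inequality $\dim\ker\theta\le\dim\ker\bar\beta$, equivalently the existence of a ``generalized Ganea sequence,'' and you explicitly defer this step, calling it ``the main obstacle of the argument.'' But that step \emph{is} the content of the proposition; asserting that ``matching these two subquotients produces the required injection'' is not a construction. The paper carries it out concretely from a free presentation $0\to R\to F\to L\to 0$: using the modular law it derives the exact dimension identity $\dim L^{2}+\dim\mathcal{M}(L)=\dim\mathcal{M}(L/L^{2})+\dim\bigl([F^{2}+R,F]/[R,F]\bigr)$ (with $F^{2}=C^{2}(F)$), and then exhibits $[F^{2}+R,F]/[R,F]$ as an epimorphic image of $L^{2}\otimes L/Z_{2}(L)$ via $\hat a\otimes\bar x\mapsto\overline{[a,x]}$; the well-definedness modulo $Z_{2}(L)$ is where the identity $[L',Z_{2}(L)]=0$ (which you did prove) enters, applied inside $F$ so that the relevant brackets land in $[R,F]$ and not merely in $R$. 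Until you actually construct this map, or the injection $\ker\theta\hookrightarrow\ker\bar\beta$, the proposal does not establish the proposition.
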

\begin{proof}
 Let $L \cong F/R$ where $F$ is a free Lie superalgebra and $R$ is a graded ideal. Since $L/L^{2} \cong \frac{(F/R)}{(F/R)^{2}} \cong \frac{F}{F^{2}+R}$, on one hand we have
 \begin{align*}
  \mathcal{M}(L/L^{2}) &= \frac{F' \cap (F^{2}+R)}{[F, F^{2}+R]}\\
 &\cong \frac{(R \cap F')+F^{2}/[R, F]}{[F, F^{2}+R]/[R, F]},
 \end{align*}
 and on the other hand,
 $$
 \frac{(R\cap F')+ F^{2}/[R, F]}{R \cap F'/[R, F]} \cong \frac{(R\cap F')+ F^{2}}{R \cap F'} \cong \frac{F^{2}}{F^{2}\cap R \cap F'}= \frac{F^{2}}{F^{2}\cap R} \cong \frac{F^{2}+R}{R}
 \cong L^{2}.
$$
We get,
$$
\dim L^{2}+ \dim \mathcal{M}(L)= \dim  \mathcal{M}(L/L^{2})+ \dim ([R+F^{2}, F]/[R, F]).
$$
Since $L$ is a nilpotent Lie superalgebra $L^{2} \neq L'$, and $\dim L'=2$, therefore $L^{2} = Z(L)$. Now put $Z_{k}(L)=S_{k}/R$ for $0\leq k \leq 1$, and consider the mapping
\begin{align*}
\theta: \frac{F^{2}+R}{R} \otimes \frac{F}{S_{2}} &\rightarrow \frac{[R+F^{2}, F]}{[R, F]}\\
 \theta(\hat{a}, \bar{x}) &=\overline{[a, x]}.
 \end{align*}
Here $\theta$ is a bilinear surjective map, $\frac{[R+F^{2}, F]}{[R, F]}$ is an epimorphic image of $L/Z_{2}(L) \otimes L^{2}$ as required.
\end{proof}

For non-abelian nilpotent Lie superalgebras $L$, the results below show that how the value of $s(L)$ put restrictions on dimension of $L$.
\begin{proposition}\label{prop4.7}
  Let $L$ be a non-abelian nilpotent Lie superalgebra of dimension $(m\mid n)$ with $\dim L'=(r \mid s), r+s=2$ and $\dim Z(L)=(a\mid b)$, $a+b=1$.
 \begin{enumerate}
\item There is no nilpotent Lie superalgebra $L$ with dimension $m+n \geq 5$ when $s(L)=1$.
\item There is no nilpotent Lie superalgebra $L$ with dimension $m+n \geq 6$ when $s(L)=2$.
\end{enumerate}  
\end{proposition}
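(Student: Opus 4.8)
The plan is to transfer the computation to the quotient $L/L^{2}$, whose derived subalgebra is forced to be one-dimensional, and then to compare the multipliers of $L$ and $L/L^{2}$ through Proposition \ref{prop4.6}, which is tailored to exactly the present hypotheses $r+s=2$ and $\dim Z(L)=1$.

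First I would record the structure imposed by the hypotheses. Since $L$ is nilpotent and non-abelian, $Z(L)$ lies inside $L'$; the inclusion $L^{2}=[L,L']\subsetneq L'$ together with $\dim L'=2$ forces $\dim L^{2}=1$, and $L^{3}=0$ then gives $L^{2}\subseteq Z(L)$, whence $L^{2}=Z(L)$ (this is already isolated in the proof of Proposition \ref{prop4.6}). Consequently $(L/L^{2})'=L'/L^{2}$ is one-dimensional, and from $Z_{2}(L)/L^{2}=Z(L/L^{2})$ one obtains $L/Z_{2}(L)\cong (L/L^{2})/Z(L/L^{2})$.

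The next step is to exploit that $L/L^{2}$ has one-dimensional derived subalgebra. By Proposition \ref{prop4.4} it is isomorphic to $H(p,q)\oplus A$ when $L'/L^{2}$ is even, and to $H_{p}\oplus A$ when $L'/L^{2}$ is odd, with $A$ abelian. In either case Corollary \ref{cor3.6} (respectively Corollary \ref{cor3.8}) supplies $\dim\mathcal{M}(L/L^{2})$ explicitly, while the identification above shows that $L/Z_{2}(L)$ is the relevant Heisenberg superalgebra modulo its center, of total dimension $2p+q$ (respectively $2p$). Because $L^{2}$ is one-dimensional, the tensor factor in Proposition \ref{prop4.6} contributes exactly this total dimension. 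Feeding $\dim\mathcal{M}(L/L^{2})$, $\dim(L/Z_{2}(L)\otimes L^{2})$ and $\dim L^{2}=1$ into the inequality of Proposition \ref{prop4.6}, and then eliminating $\dim\mathcal{M}(L)$ by means of \eqref{eq3}, I would solve for $s(L)$.

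With $N=m+n$, the resulting inequality should read $s(L)\ge N-2p-q+c-\varepsilon$ (together with its odd-center variant), where $c$ is a small constant and $\varepsilon$ is the anomaly in the Heisenberg multiplier formula. The lower bound is smallest precisely when $L/L^{2}\cong H(1,0)\oplus A$ with $L^{2}$ even, the case carrying the extra $+2$ of Corollary \ref{cor3.6}; there it collapses to $s(L)\ge N-3$. Every other structural type—an odd $L^{2}$, an $H(p,q)$ with $2p+q\ge 3$, or any member of the $H_{p}$ family—yields the strictly better bound $s(L)\ge 3$. Hence $N\ge 5$ forces $s(L)\ge 2$, ruling out $s(L)=1$, and $N\ge 6$ forces $s(L)\ge 3$, ruling out $s(L)=2$, which are exactly the two assertions. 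The main obstacle I anticipate is purely the combinatorial bookkeeping: one must run the argument over the simultaneous parities of $L^{2}$ and of $L'/L^{2}$, align each with the correct Heisenberg family, and carry the exact multiplier values—including the $+2$ for $H(1,0)$ and the $-1$ for $H_{p}$ with $p\ge 2$—since it is precisely the $H(1,0)\oplus A$ configuration that pins the thresholds $5$ and $6$ and explains the gap of one between the two parts.
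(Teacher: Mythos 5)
Your proposal is correct and follows essentially the same route as the paper: both exploit $L^2=Z(L)$ to reduce to $L/L^2=L/Z(L)$, apply Proposition \ref{prop4.4} to write that quotient as a Heisenberg superalgebra plus an abelian part, and isolate $H(1,0)\oplus A$ (via the $+2$ anomaly in Corollary \ref{cor3.6}) as the critical case yielding $s(L)\ge m+n-3$, with all other types giving $s(L)\ge 3$. The only cosmetic difference is that you push every case through Proposition \ref{prop4.6}, whereas the paper uses Theorem \ref{th3.2} for the non-critical Heisenberg types and reserves Proposition \ref{prop4.6} for the $H(1,0)$ and $H_1$ cases; the resulting thresholds agree.
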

\begin{proof}
 Suppose $L$ is nilpotent with $\dim L'=2$ and $\dim Z(L)=1$. Hence $\dim L^{2}=1$, and $L^{2}=Z(L)$. Clearly $Z(L) \subset L'$, and $(L/Z(L))^{'}=L'/Z(L)$. Consider $\dim Z(L)=(1\mid 0)$, then $\dim (L/Z(L))'=(1\mid 0)$ or $(0\mid 1)$. Assume $\dim (L/Z(L))'=(1\mid 0)$, $\dim L'=(2 \mid 0)$. By invoking Proposition \ref{prop4.4}, it follows that
 \begin{equation*}
  L/Z(L) \cong H(p, q)+ A(m-2p-2\mid n-q),
 \end{equation*} 
 for some non-negative integers $p, q$ with $p+q \geq 1$. When $p+q \geq 2$ we have
\begin{equation*}
 \dim \mathcal{M}(L/Z(L))=\frac{1}{2}[(m-2+n)^{2}+n-m)].
\end{equation*}
Clearly $\dim (L' \cap Z(L))= 1$, so using Theorem \ref{th3.2},
\begin{equation}\label{eq4.7}
 \dim \mathcal{M}(L)+1 \leq \frac{1}{2}[(m-2+n)^{2}+n-m]+m-2+n.
\end{equation}
Thus, 
\begin {align*}
 \frac{1}{2}[(m+n)^{2}-3m-n]+3-s(L) &\leq \frac{1}{2}[(m+n)^{2}-3m-n]
\end {align*}
which implies  $3  \leq s(L)$.
Similarly for $p=0, q=1$, we find $s(L) \geq 3$. Remains to deal with the case when $p=1, q=0$. We have $\frac{L}{Z(L)} \cong H(1, 0) \oplus A(m-4\mid n),$ hence $\dim Z_{2}(L)=(m-2\mid n)$, and
\begin{equation*}
  \dim  \mathcal{M}\left(\frac{L}{Z(L)}\right)=4+\frac{1}{2}[(m+n)^{2}-5m-3n] = \dim  \mathcal{M}\left(\frac{L}{L^{2}}\right).
 \end{equation*}
 By Proposition \ref{prop4.6}
 \begin{equation}\label{eq4.8}
 1+ \dim \mathcal{M}(L) \leq 4+ \frac{1}{2}[(m+n)^{2}-5m-3n]+2.
 \end{equation}
 Thus, 
\begin{align*}
 1+ \frac{1}{2}[(m+n)^{2}-3(m+n)+2]+n+1-s(L) & \leq 6+\frac{1}{2}[(m+n)^{2}-5m-3n]
 \end{align*}
 gives  $m+n-s(L) \leq 3$.
If $s(L)=1$ or $s(L)=2$ there is no non-abelian nilpotent Lie superalgebra when $\dim L \geq 5$ or $\dim L \geq 6$ respectively.
\par
Now assume $\dim (L/Z(L))'=(0\mid 1)$, i.e., here $\dim L'=(1 \mid 1)$. By invoking Proposition \ref{prop4.4}, it follows that with $p \geq 1$,
$$ L/Z(L) \cong H_p \oplus A(m-1-p \mid n-p-1).$$
Proceeding as before with $p \geq 2$ we get $s(L) \geq 4$. With $p=1$ we have $L/Z(L) \cong H_1 \oplus A(m-2 \mid n-2)$. Here $\dim Z_{2}(L)=(m-2 \mid n-2)$ and using Proposition \ref{prop4.6}, we get $m+n \leq 4$ when $s(L)=1$ and $m+n \leq 5$ when $s(L)=2$.
\par
Now consider $\dim Z(L)=(0\mid1)$. Here again $\dim (L/Z(L))^{'}=(1 \mid 0)$ or $(0 \mid 1)$. Assume $\dim (L/Z(L))^{'}=(1 \mid 0)$, so $\dim L'=(1 \mid 1)$. Then for $p+q \geq 1$
  \begin{equation*}
  L/Z(L) \cong H(p, q)+ A(m-2p-1\mid n-q-1).
 \end{equation*}
 When $p+q \geq 2$ or $p=0, q=1$ we arrive at  $s(L) \geq 3$. For $p=1, q=0$ as $\frac{L}{Z(L)} \cong H(1, 0)\oplus A(m-3\mid n-1)$, so $\dim Z_{2}(L)=(m-2\mid n)$, and 
\begin{equation*}
 \dim  \left(\mathcal{M}\Big(\frac{L}{L^{2}}\Big)\right)=-1+\frac{1}{2}[(m+n-2)^{2}+n-m].
 \end{equation*}
 Now invoking Proposition \ref{prop4.6} with $s(L)=1$ we get $m+n \leq 4$ and with $s(L)=2$ we get $m+n \leq 5$. Finally by letting $(L/Z(L))^{'}=(0 \mid 1)$ and with $p \geq 1$, $L/Z(L) \cong H_{p} \oplus A(m-p \mid n-p-2)$. When $p \geq 2$ we get $s(L) \geq 4$. For $p=1$, we get $m+n \leq 2$ with $s(L) =1$, and $m+n \leq 3$ with $s(L)=2$.
\end{proof}
\begin{proposition}\label{prop4.8}
 Let $L$ be a nilpotent Lie superalgebra of $\dim L=(m\mid n)$. Then there is no nilpotent Lie superalgebra for $L'=Z(L)$, $\dim Z(L)=2$, when $m+n \geq 6$ with $s(L)\leq 2$.
\end{proposition}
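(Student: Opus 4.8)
The plan is to assume $s(L)\le 2$ and derive $s(L)\ge 3$, contradicting the hypothesis. Since $L'=Z(L)$ with $\dim Z(L)=2$ we have $[L,L']=[L,Z(L)]=0$, so $L$ is two-step nilpotent and every one-dimensional graded subspace $I$ of $L'=Z(L)$ is a central ideal contained in $L'$. For such an $I$ one has $\dim(L'\cap I)=1$ and $\dim(L/I)'=1$, so Proposition \ref{prop4.4} identifies $L/I$ up to isomorphism, Corollary \ref{cor3.6} (resp.\ Corollary \ref{cor3.8}) gives $\dim\mathcal{M}(L/I)$ exactly, and Theorem \ref{th3.2}(1) bounds $\dim\mathcal{M}(L)$ in terms of $\dim\mathcal{M}(L/I)$. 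Substituting the defining relation $\dim\mathcal{M}(L)=\frac{1}{2}(m+n-1)(m+n-2)+n+1-s(L)$ converts each such bound into an inequality for $s(L)$. I would organise the argument according to the three possibilities $(r\mid s)\in\{(0\mid 2),(1\mid 1),(2\mid 0)\}$ for $\dim L'$.

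For $\dim L'=(0\mid 2)$ I take $I$ to be any odd line in $L'$, and for $\dim L'=(1\mid 1)$ I take $I=\langle z\rangle$ to be the even line in $L'$; in both situations $(L/I)'=(0\mid 1)$, so by Proposition \ref{prop4.4} we have $L/I\cong H_p\oplus A$ for some $p\ge 1$. Feeding the two branches of Corollary \ref{cor3.8} into Theorem \ref{th3.2}(1), the routine computation is expected to yield $s(L)\ge 4$ when $p\ge 2$ and $s(L)\ge 3$ when $p=1$. Since Corollary \ref{cor3.8} exhausts all $p\ge 1$, these two cases are settled unconditionally, and in fact the hypothesis $m+n\ge 6$ is not needed for them.

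The case $\dim L'=(2\mid 0)$ is the real obstacle. Here $I$ must be an even line in $L'$, so $(L/I)'=(1\mid 0)$ and $L/I\cong H(p,q)\oplus A$. The same substitution shows that whenever $(p,q)\ne(1,0)$ (the ``generic'' quotient) one obtains $s(L)\ge 3$, but the single exceptional shape $L/I\cong H(1,0)\oplus A$ gives only $s(L)\ge 1$, because $\dim\mathcal{M}(H(1,0))=2$ carries an extra $2$ by Theorem \ref{th3.5}. Hence the heart of the proof is to exhibit \emph{one} even central line $I$ with $L/I\not\cong H(1,0)\oplus A$. I would do this by regarding the bracket as a pencil of forms $\phi_1,\phi_2$ on $L/L'$ with values in $L'\cong\mathbb{F}^2$, whose common radical is zero precisely because $Z(L)=L'$. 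If $n>0$, no nonzero odd element is central, so $[y,w]\ne 0$ for some odd $y$; choosing $I$ to be a line of $L'$ not containing this value keeps $\bar y$ non-central in $L/I$, forcing an odd generator and hence $q\ge 1$. If $n=0$ the $\phi_i$ are genuine alternating forms on a space of dimension $m-2\ge 4$; were every nonzero combination of rank $\le 2$ (hence decomposable) one would get $\phi_1\wedge\phi_2=0$, so both forms factor through a subspace of dimension $\le 3$ and produce a common radical of dimension $\ge(m-2)-3\ge 1$, a contradiction. Thus some combination has rank $\ge 4$ and the corresponding $I$ gives $L/I\cong H(p,0)\oplus A$ with $p\ge 2$.

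In every case the chosen quotient is generic and $s(L)\ge 3$ follows, contradicting $s(L)\le 2$; this proves the proposition. I expect the decisive difficulty to be exactly the existence of a good even line in the $(2\mid 0)$ subcase: this is where $m+n\ge 6$ is indispensable, since at $m+n=5$ the pencil can consist entirely of rank-two forms. The algebra $L_{5,0}^{2}$ of Theorem \ref{th4.6}, for which every even quotient is $H(1,0)\oplus A$ and $s(L)=1$, shows the bound $m+n\ge 6$ to be sharp.
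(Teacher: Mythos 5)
Your proposal is correct and shares the paper's overall skeleton (quotient by a central line inside $L'=Z(L)$, identify $L/I$ via Proposition \ref{prop4.4}, feed Corollaries \ref{cor3.6}/\ref{cor3.8} into Theorem \ref{th3.2}, and translate into a lower bound on $s(L)$), but you resolve the decisive sub-case $\dim L'=(2\mid 0)$ by a genuinely different argument. The paper handles the problematic quotient $L/I\cong H(1,0)\oplus A$ by supposing that \emph{two} distinct central lines $I_1,I_2$ both yield such quotients, deducing $S_1\cap S_2=0$ and hence a direct-sum decomposition forcing $m=6,\ n=0$, and then bounding $\dim\mathcal{M}(L)$ via Theorem \ref{th3.4}; otherwise it picks a line with a generic quotient. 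You instead prove directly that a good line always exists: for $n>0$ a non-central odd element forces $q\geq 1$ after quotienting by a suitable line, and for $n=0$ the observation that a pencil of decomposable alternating forms on a space of dimension $m-2\geq 4$ would have a nontrivial common radical (contradicting $Z(L)=L'$) produces a combination of rank $\geq 4$, hence $p\geq 2$. This is cleaner and more conceptual: it isolates exactly where $m+n\geq 6$ enters and correctly identifies $L_{5,0}^2$ as the sharp boundary example, whereas the paper's dichotomy is terser and harder to audit. Your handling of $\dim L'=(1\mid 1)$ by quotienting by the \emph{even} line so as to land in the $H_p$ family (which has no anomalous multiplier) matches one of the paper's two branches and renders the other unnecessary. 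Two small quantitative remarks: since $I\subseteq L'$ one has $\dim(L'\cap I)=1$ in Theorem \ref{th3.2}, and the exceptional quotient $H(1,0)\oplus A$ in fact yields $s(L)\geq 2$ rather than merely $s(L)\geq 1$ (which is why Proposition \ref{prop4.7} and Theorem \ref{th4.9} can still extract $s(L)=1$ information); neither affects your conclusion, since all generic branches give $s(L)\geq 3$ as you claim.
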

\begin{proof}
Suppose that $L$ is nilpotent Lie superalgebra with $m+n \geq 6$ along with $L'=Z(L)$, and $\dim Z(L)=2$. Let $I$ be a central ideal with $\dim I=(a \mid b), a+b=1$. First assume that $\dim I=(1 \mid 0)$. Further suppose $\dim L'=(2 \mid 0)= \dim Z(L)$, then $\dim (L/I)^{'}=(1 \mid 0)$. Using Proposition \ref{prop4.4} for some non-negative integers $p, q$, and  $p+q \geq 1$ we have
 $$L/I \cong H(p, q)+ A(m-2p-2\mid n-q).$$  
Now, suppose there are two distinct central ideals $I_{1}$ and $I_{2}$ of $L$.  Clearly $\dim I_{i}=(1\mid 0)$, for $i=1,2$. For any ideal $I$ of $L$, $L'/I \subseteq Z(L/I)$, so we can write $S_{i}/I_{i} \oplus L'/I_{i}=Z(L/I_{I})$, and hence $L/I_{i}=T_{i}/I_{i}\oplus S_{i}/I_{i}$, for $ i=1,2$. Then with $p=1, q= 0$, we have $T_{i}/I_{i}\cong H(1,0)$, and $S_{i}/I_{i} \cong A(m-4\mid n-1)$. Again as $L'=Z(L)$, and  $L'/I_{i} \cap S_{i}/I_{i}=\{0\}$, so $L' \cap S_{i}\subseteq I_{i}$. We get $S_{1} \cap S_{2} \cap Z(L) \subseteq I_{1} \cap I_{2}=\{0\}$ which implies that $S_{1} \cap S_{2}=\{0\}$. Again $\dim S_{1} =(m-3 \mid n)=\dim S_{2}$, and as $\dim L=m+n \geq 6$ thus, $L=S_{1}\oplus S_{2}$, and $m=6, n=0$. Clearly $\dim (S_{i}^{'} \cap I_{i})=1$.  Using Theorem \ref{th3.2},
\begin{align*}
\dim \mathcal{M}(S_{i})+1 &\leq \frac{1}{2}[(m-4+n)^{2}+n-m+4]+m+n-4\\
\mathcal{M}(S_{i}) & \leq 2.
\end{align*}
Now \begin{align*}
     \dim \mathcal{M}(L) &\leq 2+2+(m+n-4)^{2}
    \end{align*}
    implies $3 \leq s(L)$. Hence with $s(L) \leq 2$ there is no nilpotent Lie superalgebra when $m+n \geq 6$.

Consider $L$ has only one central graded ideal $I$.  Thus for $p+q \geq 2$ and using Corollary \ref{cor3.6}, $
 \dim (L/I) \leq \frac{1}{2}[(m+n-2)^{2}+n-m]$.
 Now by Theorem \ref{th3.2} 
 \begin{equation*}
  \dim \mathcal{M}(L)+ \dim (L'\cap I) \leq \dim (\mathcal{M}(L/I))+\dim \mathcal{M}(I)+\dim \left(\frac{L}{L'},\otimes I\right),
 \end{equation*}
 and substituting for $\dim \mathcal{M}(L)$
\begin{align*}
\frac{1}{2}[(m+n-1)(m+n-2)]+n+1-s(L)+1 &\leq \frac{1}{2}[(m+n-2)^{2}+n-m]+m+n-2.
\end{align*}    
Simplifying the above inequality we get  $3 \leq s(L)$. Similarly when $p=0, q=1$ we get $s(L) \geq 3$.  
  
  \smallskip
  
Suppose $\dim L'= (1 \mid 1)$, then $\dim (L/I)^{'}=(0 \mid 1)$. Using Proposition \ref{prop4.4}, with $p \geq 1$, $$L/I \cong H_{p} \oplus A(m-p-1 \mid n-p-1).$$    
 When $p \geq 2$, $\dim \mathcal{M}(L) \leq -1+ \frac{1}{2}[(m+n-2)^{2}+n-m]$, and hence using Proposition \ref{prop4.6}, $s(L) \geq 4$. Further when $p=1$, $s(L) \geq 3$.
    \par

Next consider $\dim I=(0\mid 1)$ and assume $\dim L'=(0 \mid 2)$. So $\dim(L/I)^{'}=(0 \mid 1)$ and hence with $p \geq 1$, $$L/I \cong H_{p} \oplus A(m-p\mid n-p-2).$$    
Consider $p \geq 2$, then $\dim \mathcal{M}(L/I)= -2 +\frac{1}{2}[(m+n-2)^{2}+n-m]$, and finally applying Theorem \ref{th3.2} we get $s(L) \geq 4$. Similarly with $p=1$ we get $s(L) \geq 3$.

\par

At last, suppose $\dim I=(0\mid 1)$, and assume $\dim L'=(1 \mid 1)$, then $\dim(L/I)^{'}=(1 \mid 0)$. By using Proposition \ref{prop4.4}, we have 
 $L/I \cong H(p, q)+ A(m-2p-1\mid n-q-1)$ for some non-negative integers $p, q$, and $p+q \geq 1$. Here with $p+q \geq2$, $\dim \mathcal{M}(L/I)= -1 + \frac{1}{2}[(m+n-2)^{2}+n-m]$. By applying Theorem $\ref{th3.2}$ we get $\dim \mathcal{M}(L) +1 \leq \dim (\mathcal{M}(L/I))+1+m+n-2$ which implies $s(L) \geq 3$. Similarly we compute $s(L) \geq 3$ when $p=0, q=1$. Consider $p=1, q=0$, so $L/I \cong H(1, 0) \oplus A(m-3 \mid n-1)$, and $\dim (\mathcal{M}(L/I))=7+\frac{1}{2}[(m+n)^{2}-7m-5n]$. Finally using Theorem \ref{th3.2}, $m+n-s(L) \leq 3$. With $s(L)=1$ (resp. $s(L)=2$) we get $m+n \leq 4$ (resp. $m+n \leq 5$).
\end{proof}

\section{Proof of main results}
\subsection{Proof of Theorem \ref{th4.9a}} 
Suppose $L$ is a non-abelian nilpotent Lie superalgebra and $\dim L= (m\mid n )$, then using Remark \ref{rem510}, \[L\cong H(1, 0)\oplus A(m-3\mid n).\]  If $L$ is abelian then using Corollary \ref{cor l =2} and Theorem \ref{th3.1},
\[L\cong A(2 \mid 0), \; A(0 \mid 2), \; A(1 \mid 1).\]

\subsection{Proof of Theorem \ref{th4.9}}   
If $L$ is abelian then with Lemma \ref{lem4.1}, \[L \cong A(1 \mid 0), A(0 \mid 1).\]  Consider $L$ is non-abelian nilpotent then from Propositions \ref{prop4.2}, \ref{prop4.3}, \ref{prop4.4a}, \ref{prop4.7} and \ref{prop4.8}, we have 
\[\dim L' = 2, L' =Z(L) \quad \mbox{and} \quad m+n = 5.\]
Thus, using Theorem \ref{th4.6}, $L$ is isomorphic to one of the Lie superalgebras in Table \ref{tab4}. But with $s(L)=1$, $L \cong L_{5, 0}^2$. \qed

\subsection{Proof of Theorem \ref{th4.10}}
By Proposition \ref{prop4.2}, $\dim L' \leq 2$. There is no abelian nilpotent Lie superalgebra with $s(L)=2$. Suppose $\dim L'= (r \mid s)$, $r+s=1$, and  $s(L)=2$. If $\dim L' = (1\mid 0)$, by Proposition \ref{prop4.4},  
\[ L \cong H(0, 1)\oplus A(m-1 \mid n-1)\] or, 
 \[ L \cong H(p, q)\oplus A(m-2p-1 \mid n-q)\] where $p+q \geq 2$.
 When $\dim L'=(0\mid 1)$ then \[ L \cong H_1\oplus A(m - 1 \mid n-2).\]
   Now consider $\dim L'=(r \mid s), r+s=2$. Invoking Proposition \ref{prop4.4a} we have $\dim Z(L) \leq 3$, and $L' \subseteq Z(L)$. If $\dim Z(L)=3$ then $\dim L=(m \mid n), m + n \geq 4$. Assume $\dim L=4$ with $\dim Z(L)=3$, then this contradicts the fact that $\dim L'=2$. If $\dim Z(L)=2$, by Theorem \ref{th4.2}, there is no such $L$ along with $s(L)=2$. If $\dim Z(L)=1$, and $s(L) =2$ then by Theorem \ref{th4.3}, $L \cong L_{4,0}^1$. Now assume $m+n \geq 5$. If $\dim Z(L)=1$ by Proposition \ref{prop4.7} we have $m +n \leq 5$, and hence $\dim L=5$. Using Theorem \ref{th4.5} there is no such Lie superalgebra with $s(L)=2$. If $\dim Z(L)=2$, then $ L'= Z(L)$, and by Proposition \ref{prop4.8} we have $m +n \leq 5$ thus $\dim L=5$. By Theorem \ref{th4.6},  no such Lie superalgebra exists.  If $\dim Z(L)=3$, and $ L' \subseteq Z(L)$, choose an one dimensional central graded ideal $I$ of $L$ such that $L' \cap I = \{0\}$. First take $\dim I=(1 \mid 0)$. By Theorem \ref{th3.9}
 $$\dim \mathcal{M}(L/I) \leq \frac{1}{2}(m+n-1)(m+n-4)+n+1,$$ and using Theorem \ref{th3.2}, $$\frac{1}{2}(m+n-2)(m+n-1)+n-1= \dim \mathcal{M}(L)\leq \dim \mathcal{M}(L/I)+m+n-3.$$ Thus $\dim \mathcal{M}(L/I)=\frac{1}{2}(m+n-1)(m+n-4)+n+1$. So $s(L/I)=1$, and by Theorem \ref{th4.9}, $L/I \cong L_{5,0}^2$, 
\[L \cong L_{5, 0}^2 \oplus A(1 \mid 0).\]
If $\dim I=(0 \mid 1)$ then by proceeding  as above we get $s(L/I)=1$, and hence $L/I \cong L_{5, 0}^2$ which implies \[L \cong L_{5,0}^2 \oplus A(0 \mid 1).\] 
This completes the proof of the theorem. \qed

\subsection{Proof of Theorem \ref{th410}}

Consider the non-abelian nilpotent Lie superalgebra $L$ with $\dim L=(m \mid n)$.
If $t(L)=1$ then, $\dim L=\frac{1}{2}[(m+n)^{2}+n-m]-1$. Using \eqref{eq3} we further have $s(L)=3-(m+n)$ which implies $m+n= 2$ or $3$. If $\dim L=2$ then $s(L)=1$, and there is no such superalgebra. If $\dim L=3$ then $s(L)=0$, and clearly $L \cong H(1, 0)$. Conversely if $L \cong H(1,0)$ then $s(L)=\frac{1}{2}(2)+1- \dim \mathcal{M}(L)=0$ which clearly implies $3-t(L)=2-s(L)$,i.e. $t(L)=1$.

\smallskip

 If $t(L)=2$ then $\dim \mathcal{M}(L)=\frac{1}{2}[(m+n)^{2}+n-m]-2$, and then $s(L)=4-(m+n)$, and hence $2\leq \dim L \leq 4$. Let $m+n=2$,  more precisely  $m=1, n=1$ then $s(L)=2$ and $L \cong H(0,1)$. Conversely let $L \cong H(0,1)$, and we know $\dim \mathcal{M}(H(0,1))=0$, so clearly $t(L)=2$. Consider $\dim L=3$ then $s(L)=1$, and there is no such Lie superalgebra. If $m+n=4$ then $s(L)=0$, and $L\cong H(1,0) \oplus A(1 \mid 0)$. Conversely if $L\cong H(1,0) \oplus A(1 \mid 0)$, then $\dim L=(4 \mid 0)$, and $\dim \mathcal{M}(L)=4$. Altogether this gives $t(L)=2$.
 
 \smallskip
 
 If $t(L)=3$, then $s(L)=5-(m+n)$, and hence $3 \leq \dim L \leq 5$. Consider $m+n=3$ then $s(L)=2$. If $\dim L=(3\mid 0)$ there is no superalgebra with $s(L)=2$. Take $\dim L=(2 \mid 1)$ then with $s(L)=2$ we get $L \cong H(0,1)\oplus A(1 \mid 0)$. Conversely $t(L)=4-\dim \mathcal{M}(L)$ implies $t(L)=3$. Consider $\dim L=(1 \mid 2)$ then with $s(L)=2$, $L \cong H(0,2),$ or $H(0,1)\oplus A(0 \mid 1)$ or $H_{1}$. Conversely if $L \cong H(0,2)$ then $\dim L =2$ and hence $t(L)=2$.  Similarly for the rest superalgebras  we get $t(L)=2$. Consider  $\dim L=4$ then $s(L)=1$, and there is no such superalgebra. Finally, if $\dim L=5$ then $s(L)=0$ if and only if $L \cong H(1, 0)\oplus A(2 \mid 0)$.
 
 \smallskip
 
 If $t(L)=4$ then $s(L)=6-(m+n)$ implies $3 \leq \dim L \leq 6$. Let $m+n=3$ then $s(L)=3$. Certainly $ \dim L \neq (0 \mid 3)$, and if $ \dim L= (3 \mid 0)$ then $s(L)=0$. Consider $\dim L =(2\mid 1)$, and let $\{x_{0}, y_{0}, x_{1}\}$ be a basis for $L$ where $x_{0}, y_{0} \in L_{\bar{0}}$, $x_{1} \in L_{\bar{1}}$. Only possible non-vanishing bracket is $[x_{1}, x_{1}]=x_{0}$, and so $\dim L'=(1 \mid 0)$. Hence $L \cong H(0, 1)\oplus A(1 \mid 0)$ which implies $s(L)=2$. Consider $\dim L=(1 \mid 2)$, and $\{x_{0}, x_{1}, y_{1}\}$ be a basis for $L$ where $x_{0} \in L_{\bar{0}}$, $x_{1}, y_{1} \in L_{\bar{1}}$. If $\dim L'=( 0 \mid 1)$ then  $[x_{0} x_{1}]=y_{1}$, $L \cong H_{1}$. If $\dim L'=(1 \mid 0)$ then $L \cong H (0, 2)$. So there is no nilpotent Lie superalgebra when $\dim L=3$ with  $s(L)=3$. Consider $m+n=4$ then $s(L)=2$. By using Theorem \ref{th4.10}, $L \cong$  
 $L_{4,0}^1$, or $H(0,3)$, or $H_{1} \oplus A(1 \mid 0)$, or $H(0,1)\oplus A(1 \mid 1)$,or $H(0,1)\oplus A(0 \mid 2)$, or $H(0, 1)\oplus A(2 \mid 0)$ or $H_{1}\oplus A(0 \mid 1)$  or $H_{1} \oplus A(2 \mid 0)$. If $\dim L=5$ then $s(L)=1$ if and only if $L \cong L_{5,0}^{2}$. Finally if $\dim L =6$ then $s(L)=0$ if and only if $L \cong H(1,0) \oplus A(3 \mid 0)$. \qed

\section{Acknowledgement}

We would like to thank the referee for  his/her comments which helped us to improve the readability of this paper. The author's research is supported by NBHM Post-Doctoral Fellowship, Govt. of India.

\medskip

\end{document}